\begin{document}
\baselineskip=15.5pt

\newtheorem{thm}{Theorem}[section]
\newtheorem{lem}[thm]{Lemma}
\newtheorem{cor}[thm]{Corollary}
\newtheorem{prp}[thm]{Proposition}

\theoremstyle{definition}
\newtheorem{dfn}[thm]{Definition}
\newtheorem{ex}[thm]{Example}
\newtheorem{rmk}[thm]{Remark}

\newcommand{\aro}{\longrightarrow}
\newcommand{\arou}[1]{\stackrel{#1}{\longrightarrow}}

\newcommand{\mm}[1]{\mathrm{#1}}
\newcommand{\bb}[1]{\mathbf{#1}}
\newcommand{\cc}[1]{\mathscr{#1}}

\newcommand{\ce}{\cc{E}}
\newcommand{\cm}{\cc{M}}
\newcommand{\cn}{\cc{N}}
\newcommand{\co}{\cc{O}}
\newcommand{\cq}{\cc{Q}}
\newcommand{\cu}{\cc{U}}
\newcommand{\cv}{\cc{V}}
\newcommand{\cp}{\cc{P}}
\newcommand{\ca}{\cc A}

\newcommand{\g}[1]{\mathfrak{#1}}
\newcommand{\PP}{\mathbf{P}}
\newcommand{\GL}{\mathrm{GL}}
\newcommand{\NN}{\mathbf{N}}
\newcommand{\ZZ}{\mathbf{Z}}
\newcommand{\QQ}{\mathbf{Q}}

\newcommand{\al}{\alpha}

\newcommand{\be}{\beta}

\newcommand{\ga}{\gamma}
\newcommand{\Ga}{\Gamma}

\newcommand{\Om}{\Omega}

\newcommand{\te}{\theta}
\newcommand{\Te}{\Theta}
\newcommand{\ph}{\varphi}
\newcommand{\Ph}{\Phi}
\newcommand{\ps}{\psi}
\newcommand{\Ps}{\Psi}
\newcommand{\ep}{\varepsilon}
\newcommand{\ka}{\kappa}
\newcommand{\si}{\sigma}
\newcommand{\de}{\delta}
\newcommand{\De}{\Delta}
\newcommand{\ze}{\zeta}

\newcommand{\fr}[2]{\frac{#1}{#2}}
\newcommand{\na}{\nabla}
\newcommand{\po}{\cdot}
\newcommand{\hh}[3]{\mathrm{Hom}_{#1}(#2,#3)}
\newcommand{\modules}[1]{#1\text{-}\mathbf{mod}}
\newcommand{\alg}[1]{#1\text{-}\mathbf{alg}}
\newcommand{\spc}{\mathrm{Spec}\,}

\newcommand{\id}{\mathrm{id}}
\newcommand{\ti}{\times}
\newcommand{\tiu}[1]{\underset{#1}{\times}}

\newcommand{\ot}{\otimes}
\newcommand{\otu}[1]{\underset{#1}{\otimes}}

\newcommand{\wt}{\widetilde}
\newcommand{\ov}[1]{\overline{#1}}
\newcommand{\op}{\oplus}
\newcommand{\mc}{\bb{MC}}
\newcommand{\mic}{\mathbf{MIC}}
\newcommand{\mictr}{\mathbf{MIC}^{\rm tr}}
\newcommand{\mctr}{\mathbf{MC}^{\rm tr}}

\title[Connections on trivial bundles]{Connections on trivial vector bundles over projective schemes}

\author[I. Biswas]{Indranil Biswas}

\address{School of Mathematics, Tata Institute of Fundamental
Research, Homi Bhabha Road, Mumbai 400005, India}

\email{indranil@math.tifr.res.in}

\author[P. H. Hai]{Ph\`ung H\^o Hai}

\address{Institute of Mathematics, Vietnam Academy of Science and Technology, Hanoi, 
Vietnam}

\email{phung@math.ac.vn}

\author[J. P. dos Santos]{Jo\~ao Pedro dos Santos}

\address{Institut de Math\'ematiques de Jussieu -- Paris Rive Gauche, 4 place Jussieu,
Case 247, 75252 Paris Cedex 5, France}

\email{joao\_pedro.dos\_santos@yahoo.com}

\subjclass[2010]{14C34, 16D90, 14K20, 53C07}

\keywords{Lie algebra, Hopf algebra, neutral Tannakian categories, differential Galois group}

\date{January 20, 2023.}

\maketitle
\begin{abstract}

Over a smooth and proper complex scheme, the differential Galois group of an integrable connection may be obtained 
as the closure of the transcendental monodromy representation. In this paper, we employ a completely algebraic 
variation of this idea by restricting attention to connections on trivial vector bundles and replacing the 
fundamental group by a certain Lie algebra constructed from the regular forms. In more detail, we show that the 
differential Galois group is a certain ``closure'' of the aforementioned Lie algebra. 
\end{abstract}

\section{Introduction}

A fundamental result of modern differential Galois theory affirms that, for a proper ambient variety, the differential Galois group might be 
obtained as the Zariski closure of the monodromy group. Our objective here is to make a synthesis of results by other mathematicians and 
use this to throw light on a similar finding in the realm of connections on trivial vector bundles. In this case, the role of the fundamental 
group is played by a certain Lie algebra (see Definition \ref{25.06.2020--2}) and the role of the Zariski closure by the group-envelope (see Definition 
\ref{06.08.2020--4}).

Let us be more precise: consider a field $K$ of characteristic zero, a smooth, geometrically connected and proper 
$K$-scheme $X$, and a $K$-point  $x_0$ of $X$. In the special case $K\,=\,\mathbf C$, it is known, mainly due to GAGA, that the category of 
integrable connections on $X$ is equivalent to the category of complex representations of the transcendental 
object $\pi_1(X(\mathbf C),\,x_0)$. In addition, for any such connection $(\ce,\,\na)$, the differential Galois 
group at the point $x_0$ (Definition \ref{25.05.2021--1}) is the Zariski closure of the image ${\rm Im}(M_\ce)$, where 
$M_\ce\,:\,\pi_1(X(\mathbf C),\,x_0)\,\longrightarrow\, {\bf GL}(\ce|_{x_0})$ is the transcendental monodromy 
representation. 

In this work, we wish to draw attention to the fact that the category of integrable connections $(\ce,\,\na)$ on 
trivial vector bundles (that is, $\ce\,\simeq\,\co^{\oplus r}_X$) is  equivalent 
 to the category of representations of {\it a  Lie algebra $\g L_X$}. Then, in the same spirit as the previous paragraph, the differential Galois group of 
$(\ce,\,\na)$ at the point $x_0$ will be the ``closure of the image of $\g L_X$'' in $\bb{GL}(\ce|_{x_0})$ (see 
Definition \ref{06.08.2020--4}). The advantage here is that, contrary to what happens with the computation of the 
monodromy representation in the case $K\,=\,\mathbf C$, the image of $\g L_X$ is immediately visible. See Theorem 
\ref{25.06.2020--1}.
 
Once the above results have been put up, it is very simple to construct connections on curves with prescribed 
differential Galois groups. For this goal, we make use of the fact that semi-simple, respectively reductive, Lie   algebras can be generated by 
solely two elements, respectively three (if the base field is sufficiently  ``large''). See Corollary \ref{14.06.2021--1} and Corollary \ref{04.01.2023--1}.

\subsection*{Acknowledgements}
The first named author is partially supported by a J. C. Bose Fellowship. 
The second named author is supported by the Vietnam Academy of Science and Technology under grant number CTTH00.02/23-24 and the Vingroup Joint Stock Company -- Vingroup Innovation Foundation under the project code VINIF.2021.DA00030.
The third named author thanks M. van der Put for his conscientious reply concerning
the main theme of this work, K.-O. St\"ohr for insights  on the theory of curves, and C. Bonaf\'e and T. Delcroix for help with groups of type $G_2$. He also thanks the
CNRS for granting him a research leave in 2020 which allowed for parts of this text to be elaborated. 
All authors thank the referee for this journal for many remarks which made the text much clearer and for suggesting that we think about what is now Corollary \ref{04.01.2023--1}. 
 
\subsection*{Some notation and conventions}

In all that follows, $K$ is a field of characteristic zero. Vector spaces, associative algebras,
Lie algebras, Hopf algebras, etc, are always to be considered over $K$.

\begin{enumerate}[(1)]
\item The category of finite dimensional vector spaces (over $K$) is denoted by $\bb{vect}$. 

\item The category of Lie algebras is denoted by $\bb{LA}$. The category of Hopf algebras \cite[p.~71]{sweedler69} is denoted by $\mathbf{Hpf}$. 

\item All group schemes are to be affine; $\bb{GS}$ is the category of affine group schemes. Given $G\,\in\,\bb{GS}$, we 
let $\bb{Rep}\,G$ stand for the category of finite dimensional representations of $G$.

\item If $\g A$ stands for an associative algebra, we let $\modules{\g A}$ be the category of left $\g A$-modules which are of finite dimension over $K$. The same notation is invoked for Lie algebras. 

\item An {\it ideal} of an associative algebra is, unless otherwise specified, a two-sided ideal. The {\it tensor algebra} on a vector space $V$ is denoted by $\bb T(V)$. The {\it free algebra} on a set $\{s_i\}_{i\in I}$ is denoted by $K\{s_i\}$.

\item A {\it curve} $C$ is a one dimensional, integral and smooth $K$-scheme. 

\item A {\it vector bundle} is       a locally free   sheaf
of finite rank. A {\it trivial} vector bundle on $X$ is a direct sum of a finite number of copies of $\co_X$. 
\end{enumerate}

\section{Construction of a Hopf algebra}

Let $\Ph$ and $\Ps$ be two finite dimensional vector spaces, and let 
\[
\be\,\,:\,\, \Ph\ot \Ph \,\aro\, \Ps
\] 
be a $K$-linear map with transpose $\be^*\,:\,\Ps^*\,\aro\, \Ph^*\ot \Ph^*$. Let
\[
\g I_\be \,=\, \text{Ideal in $\bb T( \Ph^*)$ generated by}\ {\rm Im}\,\be^* ,
\] 
and define 
\begin{equation}\label{gq}
\g A_{\be}\,=\,\bb T(\Ph^*)/\g I_\be. 
\end{equation}
It is useful at this point to note that $\g I_\be$ is a homogeneous ideal so that $\g A_\be$ has a natural grading.
In more explicit terms, fix a basis $\{\ph_i \}_{i=1}^{r}$ of $\Ph$ and
a basis $\{\ps_i\}_{i=1}^{s}$ of $\Ps$. Write $\{\ph_i^*\}_{i=1}^r$ and $\{\ps_i^*\}_{i=1}^s$ for the respective
dual bases. If
\[
\be(\ph_k \ot \ph_\ell) \,=\, \sum_{i=1}^{s} \be_i^{(k\ell)}\po \ps_i,
\]
then
\[
\be^*( \ps_i^*)\,=\,\sum_{1\le k,\ell\le r} \be_i^{(k\ell)} \po \ph_k^*\ot\ph_\ell^*.
\]
Consequently, $\g A_\be$ in \eqref{gq} is the quotient of the free algebra $K\{t_1,\,\ldots,\,t_{r}\}$ (identified with $\bb T(\Ph^*)$ in the obvious way) by the ideal generated by the $s$ elements
\[
\sum_{1\le k,\ell\le r} \be_i^{(k\ell)} t_kt_\ell,\qquad i\,=\,1,\,\ldots,\,s.
\]
In particular, given $V\,\in\,\bb{vect}$ and elements $A_1,\,\ldots,\,A_{r}\,\in\,\mm{End}(V)$, the association
$t_i\,\longmapsto\, A_i$ defines a representation of $\g A_\be$ if and only if 
\[
\sum_{1\le k,\ell\le r} \be_i^{(k\ell)}\po A_kA_\ell\,=\,0,\qquad\forall\ \,i\,=\,1,\,\ldots,\,s.
\]
It is worth pointing out that if $\be$ is alternating, then 
\begin{equation}\label{01.07.2020--3}
\sum_{1\le k,\ell\le r} \be_i^{(k\ell)} t_kt_\ell\,=\,\sum_{1\le k<\ell\le r} \be_i^{(k\ell)}[t_k,\,t_\ell].
\end{equation}
This reformulation has useful consequences for the structure of $\g A_\be$.

\smallskip
\begin{center}
{\it From now on, $\be$ is always assumed to be alternating}.
\end{center}
\smallskip

Let $\bb L(\Ph^*)$ be the free Lie algebra on the vector space $\Ph^*$ so that $\bb T(\Ph^*)$ is the universal enveloping algebra of $\bb L(\Ph^*)$ \cite[II.3.1, p.~32, Theorem 1]{bourbaki_algebres_lie}. Clearly, abbreviating $\ph_i^*$ to $t_i$, 
\[
\sum_{1\le k<\ell\le r} \be_i^{(k\ell)}[t_k,\,t_\ell]\,\in\, \bb L(\Ph^*),\qquad\forall\ i\,=\,1,\,\ldots,\,s.
\]
Let 
\[
\g K_\be\,=\,\,\,\begin{array}{c}\text{Lie ideal of $\bb L(\Ph^*)$ generated by the $s$}\\ \text{elements 
$\{\sum_{1\le k<\ell\le r} \be_i^{(k\ell)}[t_k,\,t_\ell]\}_{i=1}^s$ in \eqref{01.07.2020--3}}.
\end{array}
\]

\begin{prp}\label{12.01.2021--1} 
The algebra $\g A_\be$ in \eqref{gq} is the universal enveloping algebra of the Lie algebra $\bb L(\Ph^*)/\g K_\be$.
\end{prp}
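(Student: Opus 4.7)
The plan is to reduce the claim to the well-known general fact that, for any Lie algebra $\g g$ and any Lie ideal $\g k \subseteq \g g$, the enveloping algebra of the quotient satisfies
\[
U(\g g/\g k)\,\cong\,U(\g g)\big/J(\g k),
\]
where $J(\g k)$ denotes the two-sided associative ideal of $U(\g g)$ generated by $\g k$. (This is immediate from the universal property of $U$, and is standard in \cite{bourbaki_algebres_lie}.) I intend to apply this with $\g g = \bb L(\Ph^*)$, whose enveloping algebra is $\bb T(\Ph^*)$, and with $\g k = \g K_\be$. What then remains to be checked is exactly that
\[
J(\g K_\be)\,=\,\g I_\be
\]
as two-sided ideals of $\bb T(\Ph^*)$.

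First I would verify the inclusion $\g I_\be \subseteq J(\g K_\be)$. By definition, $\g I_\be$ is generated by the elements $\be^*(\ps_i^*) = \sum_{k,\ell}\be_i^{(k\ell)}\ph_k^*\ot\ph_\ell^*$; since $\be$ is alternating, these equal the generators $\sum_{k<\ell}\be_i^{(k\ell)}[t_k,t_\ell]$ of $\g K_\be$ by the reformulation in \eqref{01.07.2020--3}. As $\g K_\be \subseteq J(\g K_\be)$, this inclusion follows at once.

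Next I would prove the reverse inclusion $J(\g K_\be)\subseteq \g I_\be$. The key observation is that the associative ideal $\g I_\be$ is stable under the adjoint action of $\bb L(\Ph^*)$: for any $x \in \bb L(\Ph^*) \subseteq \bb T(\Ph^*)$ and any $a \in \g I_\be$, the element $[x,a] = xa - ax$ again belongs to $\g I_\be$. Since $\g I_\be$ contains the generators $\sum_{k<\ell}\be_i^{(k\ell)}[t_k,t_\ell]$ of the Lie ideal $\g K_\be$, ad-stability forces $\g K_\be \subseteq \g I_\be$, and hence $J(\g K_\be)\subseteq \g I_\be$ as well.

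There is no substantial obstacle here: the argument is essentially bookkeeping, and the only point requiring attention is that the passage from ``Lie ideal generators'' to ``associative ideal generators'' introduces no extra elements, which is precisely what the ad-stability observation ensures. Combining the two inclusions gives $J(\g K_\be) = \g I_\be$, and substitution into the general identity recalled above yields $\g A_\be = \bb T(\Ph^*)/\g I_\be \cong U(\bb L(\Ph^*)/\g K_\be)$, as claimed.
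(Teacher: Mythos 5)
Your proof is correct and follows essentially the same route as the paper: both reduce to the standard fact that $U(\g g/\g k)\cong U(\g g)/J(\g k)$ (Bourbaki phrases this with the left ideal $\bb T(\Ph^*)\po\g K_\be$, which is automatically two-sided and coincides with your $J(\g K_\be)$) and then identify that ideal with $\g I_\be$. Your two-inclusion verification, using the alternating identity \eqref{01.07.2020--3} and the ad-stability of the associative ideal $\g I_\be$, just spells out what the paper dismisses as ``easily seen.''
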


\begin{proof}This is a consequence of the following general observations. 
Let $\g g$ be a Lie algebra and $\iota:\g g\to U$ be the morphism into its universal enveloping algebra. Let $S\subset\g g$ be a subset and let $S_{\rm Lie}\subset\g g$, respectively $S_{\rm alg}\subset U$,  be the Lie ideal  generated by $S$, respectively the ideal generated by $\iota (S)$.  A moment's thought proves that  $S_{\rm alg}$ is the ideal of $U$ generated by $\iota(S_{\rm Lie})$. 
We conclude that $U/S_{\rm alg}$ is the universal enveloping algebra of the Lie algebra $\g g/S_{\rm Lie}$ \cite[I.2.3, Proposition 3]{bourbaki_algebres_lie}.
Applying this to the Lie ideal $\g K_\be$, the ideal $\g I_\be$ and the   Lie algebra  $\bb L(\Ph^*)$, we arrive at a proof of the proposition.  
\end{proof}

\begin{dfn}The Lie algebra $\bb L(\Ph^*)/\g K_\be$ shall be denoted by $\g L_\be$.
\end{dfn} 

A simple remark should be recorded here. 
\begin{lem}\label{01.07.2020--1}
The above Lie algebra $\g L_\be$ is a quotient of the free Lie algebra $ \bb L(\Ph^*)$. In particular, $\g L_\be$
is generated by the image of $\Ph^*$.
\end{lem}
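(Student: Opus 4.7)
The plan is to argue directly from the definitions, since the statement is essentially a tautological unwinding of how $\g L_\be$ was constructed.

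First I would observe that the first assertion requires no work: by its very definition $\g L_\be = \bb L(\Ph^*)/\g K_\be$, and $\g K_\be$ was introduced precisely as the \emph{Lie ideal} of $\bb L(\Ph^*)$ generated by the elements $\sum_{k<\ell}\be_i^{(k\ell)}[t_k,t_\ell]$. Thus the canonical map $\bb L(\Ph^*)\aro \g L_\be$ is a surjection of Lie algebras, exhibiting $\g L_\be$ as a quotient.

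For the second assertion, I would invoke the universal property of the free Lie algebra $\bb L(\Ph^*)$: it is generated as a Lie algebra by the image of the canonical inclusion $\Ph^*\hookrightarrow \bb L(\Ph^*)$ (this is part of the standard construction, since any linear map $\Ph^*\to \g g$ into a Lie algebra extends uniquely to a Lie algebra homomorphism on $\bb L(\Ph^*)$; applying this to the inclusion into any Lie subalgebra containing the image of $\Ph^*$ shows that such a subalgebra must be everything). Since the property of being generated by a set is preserved under surjective Lie algebra morphisms, the image of $\Ph^*$ in $\g L_\be$ generates $\g L_\be$.

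There is no genuine obstacle here; the only point of substance is the appeal to the generation property of $\bb L(\Ph^*)$, which is built into the construction of the free Lie algebra on a vector space (see for instance \cite[II.2.1]{bourbaki_algebres_lie}, already cited in the text). Accordingly, the proof should be only a few lines.
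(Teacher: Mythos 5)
Your proof is correct and follows exactly the route the paper intends: the paper records this as ``a simple remark'' with no written proof, since $\g L_\be$ is by definition the quotient $\bb L(\Ph^*)/\g K_\be$ and the free Lie algebra is generated by $\Ph^*$. Your few lines simply make that explicit, and the appeal to the universal property to justify generation is the standard justification.
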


Recall that for a Lie algebra $L$, the universal enveloping algebra $\bb UL$ has a natural structure of
Hopf algebra \cite[3.2.2, p.~58]{sweedler69} and hence from Proposition \ref{12.01.2021--1} it
follows that $\g A_\be$ has the structure of a Hopf
algebra. Similarly, $\bb T(\Ph^*)$ is also a Hopf algebra and the quotient map
\begin{equation}\label{eq}
\bb T(\Ph^*)\,\longrightarrow\, \g A_\be
\end{equation}
is an arrow of Hopf algebras. 

In what follows, we give the category $\modules{\g A_\be}$ the tensor product explained in \cite[1.8.1, 
p.~14]{montgomery93}. To wit, if $V$ and $W$ are $\g A_\be$--modules, then $V\ot_KW$ is an $\g A_\be$--module by means of the composition $\textcircled{\tiny1}$ below:
\[\xymatrix{
\g A_\be\ot V\ot W\ar@/_1.8pc/[rrrrd]_-{\textcircled{\tiny1}}\ar[rrr]^-{\text{comult.}\ot\id\ot\id}&&&\g A_\be\ot\g A_\be\ot V\ot W\ar@{=}[r]&(\g A_\be  \ot V)\ot (\g A_\be\ot W)\ar[d]^{\text{mult.}\ot\text{mult.}}
\\
&&&&V\ot W.
}\]
It turns out that the canonical equivalence
\begin{equation}\label{06.08.2020--1}
\modules {\g L_\be}\,\,\arou \sim \,\,\modules{\g A_\be}
\end{equation}
is actually a tensor equivalence. 

The only case in which $\g A_\be$ will interest us is that of:

\begin{dfn}\label{25.06.2020--2}
Let $X$ be a smooth, connected and projective $K$-scheme. Let $$\be\,\,:\,\,H^0(X,\, \Om_X^1)\ot H^0(X,\, \Om_X^1)
\,\longrightarrow\, H^0(X,\, \Om_X^2)$$
be the wedge product of differential forms. We put \[\g A_\be\,=\,\g A_X\quad\text{ and }\quad \g L_\be\,=\,\g L_X.\] 
\end{dfn} 
 
\section{Connections}

We shall begin this section by establishing the notation and pointing out  references. We fix a smooth and 
connected $K$-scheme $X$. Soon, we shall assume $X$ to be projective.

\begin{dfn}
We let $\mc$ be the category of $K$--linear connections on coherent $\mathscr O_X$--modules and $\mic$ the full 
subcategory of $\mc$ whose objects are integrable connections \cite[1.0]{katz70}. We let $\mctr$ be the full subcategory 
of $\mc$ having as objects pairs $(\ce,\,\nabla)$ in which $\ce$ is a trivial vector bundle. The category $\mictr$ 
is defined analogously: it is the full subcategory
of $\mic$ having as objects pairs $(\ce,\,\nabla)$ in which $\ce$ is a trivial vector bundle.
\end{dfn}

A fundamental result of the theory of connections is that for each $(\ce,\,\na)$, the coherent sheaf $\ce$ is
actually locally free \cite[Proposition 8.8]{katz70}. Using this and the reconstruction theorem of Tannakian categories \cite[Theorem 2.11]{deligne-milne82}, it is possible to show that, given $x_0\,\in\, X(K)$, the functor
``taking the fibre at $x_0$'' defines a $K$-linear tensor equivalence
\begin{equation}\label{01.07.2020--2}
\bullet|_{x_0}\,\,:\,\, \mic\,\,\arou\sim \,\,\bb{Rep} \, \Pi(X,\,x_0) ,
\end{equation}
where $\Pi(X,\,x_0)$ is a group scheme over $K$. This group scheme is
sometimes called the ``differential fundamental group scheme of $X$ at $x_0$''.
It is in rare cases that $\Pi(X,\,x_0)$ will be an algebraic group (if, for example, $K=\mathbf C$,   $X$ is proper and $\pi_1(X^{\mathrm an})$ is finite), and hence it is important to turn it into a
splice of smaller pieces. This motivates the following definition.

\begin{dfn}[The differential Galois group]\label{25.05.2021--1}Let $(\mathscr E,\,\na)\,\in\,\mic$ be given, and let $\rho_{\mathscr E}\,:\,\Pi(X,\,x_0)\,
\longrightarrow\,\bb{GL}(\mathscr E|_{x_0})$ be the representation associated to $\ce$ via the equivalence
in \eqref{01.07.2020--2}. The image of $\rho_{\mathscr E}$ in $\bb{GL}(\mathscr E|_{x_0})$ is the differential Galois group of
$(\mathscr E,\,\na)$ at the point $x_0$.
\end{dfn} 

\begin{rmk}\label{10.06.2021--2}For $(\ce,\,\na)\,\in\,\bb{MIC}$, the category of representations of the differential
Galois group of $(\ce,\,\na)$ at $x_0$ is naturally a full subcategory of $\bb{MIC}$. For each vector bundle, let us agree to denote by $\check{\mathscr{F}}$
its dual $\mathscr{H}\!om(\mathscr{F},\co_X)$, and endow it with the canonical connection \cite[1.1]{katz70}. It is then  not difficult to see that     
\[
\langle(\ce,\,\na)\rangle_\ot\,\,=\,\left\{ \cm'/\cm''\in\bb{MIC} \,:\,
\begin{array}{c}
\text{there exist $a_i,b_i\in\NN$ such that}
\\
 \text{$\cm''\subset\cm' \subset \bigoplus_i \ce^{\ot a_i}\ot\check\ce^{\ot b_i} $} 
 \end{array}
 \right \}.
\]
See \cite[3.4 and  3.5]{waterhouse79}.
\end{rmk}

From now on, $X$ is in addition {\it projective}. 
Let us be more explicit about objects in $\mctr$. Fix $E\in\bb{vect}$ and let
\[\begin{split}
A&\,\in\,\hh{ \alg K}{\bb T(H^0(X,\, \Om_X^1)^*)}{\,\,\mm{End}(E)}
\\&=
\hh{}{H^0(X,\, \Om_X^1)^*}{\,\,\mm{End}(E)}
\\
&=\mm{End}(E)\ot H^0(X,\, \Om_X^1).
\end{split}\] (We note that in order to make the final identification above, we relied on the fact that $\dim H^0(\Om_X^1)<\infty$.)
Hence, $A$ gives rise to an $\mm{End}(E)$--valued 1--form on $X$ which, in turn, gives rise to a connection 
\begin{equation}\label{eda}
d_A\,:\,\co_X\ot E\,\aro\, (\co_X\ot E)\otu{\co_X}\Om_X^1
\end{equation}
on the trivial vector bundle $\co_X\ot E$.
Explicitly, let $\{\te_i\}_{i=1}^g$ be a basis of $H^0(X,\,\Om_X^1)$ with dual basis $\{\ph_i\}_{i=1}^g$  and let  $A_i\,:=\,A(\ph_i)\, \in\, \text{End}(E)$; we arrive at 
\[
d_{A}(1\ot e) \,=\, \sum_{i=1}^g(1\ot A_i(e))\ot\te_i
\]
for all $e\, \in\, E$.

\begin{dfn}
The above pair consisting of $(\co_X\ot E,\,d_A)$ shall be denoted by $\cv (E,\,A)$.
\end{dfn}

Let now $\{\si_i\}_{i=1}^h$ be a basis of $H^0(X,\, \Om_X^2)$ and write 
\[
\te_k\wedge\te_\ell\,=\,\sum_{i=1}^h\be_{i}^{(k\ell)}\po\si_i.
\]
Since $X$ is proper, Hodge theory tells us that all global 1-forms are closed \cite[Theorem 5.5]{deligne68} and hence
the curvature
\[
R_{d_A}\,:\,\co_X\ot E\,\aro\, \left(\co_X\ot E\right)\ot_{\co_X}\Om_X^2\]
of the connection $d_A$ in \eqref{eda} satisfies
\[
R_{d_A}(1\ot e)\,=\,\sum_{i=1}^h\sum_{1\le k,\ell\le g}\left(1\ot\be_{i}^{(k\ell)}A_kA_\ell(e)\right)\ot\si_i. 
\]
Hence, $R_{d_A}\,=\,0$ if and only if 
\[
\sum_{1\le k,\ell\le g}\be_i^{(k\ell)}A_kA_\ell\,=\,0
\]
for each $i\,\in\,\{1,\,\ldots,\,h\}$.
Also, since $\be$ in Definition \ref{25.06.2020--2} is alternating, we conclude that
$R_{d_A}\,=\,0$ if and only if for each $i\,\in\,\{1,\,\ldots,\,h\}$,
\[
\sum_{1\le k,\ell\le g}\be_i^{(k\ell)}A_kA_\ell\,=\,\sum_{1\le k<\ell\le g}\be_i^{(k\ell)}[A_k,\,A_\ell]\,=\,0.
\]

These considerations form the main points of the proof of the following result, whose thorough verification is 
left to the interested reader. (It is worth noting that $K\,=\,H^0(X,\,\co_X)$ since $X$ is proper, integral and has a $K$-point.)

\begin{prp}The functor 
\[
\cv\,\,:\,\, \modules{\bb T(H^0(\Om_X^1)^*)}\,\aro\, \mctr
\]
is an equivalence of $K$-linear categories. Under this equivalence, $\cv (E,\,A)$ lies in $\mictr$ if and only if $(E,\,A)$ is
in fact a representation of $\g A_X$ (see Definition \ref{25.06.2020--2}).\qed
\end{prp}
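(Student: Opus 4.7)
The plan is to exhibit an explicit quasi-inverse of $\cv$, and to read off the integrability statement directly from the curvature calculation already carried out.

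For essential surjectivity, given $(\cm,\na)\in\mctr$, set $E:=H^0(X,\cm)$. Since $X$ is projective and geometrically connected, $H^0(X,\co_X)=K$, and for $\cm$ trivial the natural evaluation map $\co_X\ot_K E\to\cm$ is an isomorphism. Under this isomorphism, the difference $\na-d\ot \id_E$ (where $d$ is the exterior derivative on $\co_X$) is $\co_X$-linear and so corresponds to a global section of $\mm{End}(E)\ot_K \Om_X^1$, that is, to an element
\[
A\,\in\,\mm{End}(E)\ot_K H^0(X,\Om_X^1)\,=\,\hh{K}{H^0(X,\Om_X^1)^*}{\mm{End}(E)}\,=\,\hh{\alg K}{\bb T(H^0(X,\Om_X^1)^*)}{\mm{End}(E)},
\]
the last equality coming from the universal property of the tensor algebra. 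This is exactly the datum of a $\bb T(H^0(X,\Om_X^1)^*)$-module structure on $E$, and the construction is manifestly inverse to $\cv$ up to canonical isomorphism.

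For fullness and faithfulness, a morphism $\cv(E,A)\to\cv(E',A')$ is an $\co_X$-linear arrow $\ph:\co_X\ot E\to\co_X\ot E'$ flat with respect to the connections. Using once more that $H^0(X,\co_X)=K$ and $H^0(X,\mm{Hom}(E,E')\ot\co_X)=\hh{K}{E}{E'}$, such $\ph$ is uniquely of the form $\id_{\co_X}\ot f$ with $f\in\hh KE{E'}$. Evaluating the flatness condition in a basis $\{\te_i\}$ of $H^0(X,\Om_X^1)$ with dual basis $\{\ph_i\}$, the only surviving contribution comes from the connection forms and yields $f\po A_i\,=\,A_i'\po f$ for all $i$, which is exactly the condition that $f$ be a morphism of $\bb T(H^0(X,\Om_X^1)^*)$-modules. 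Hence $\cv$ identifies the Hom-sets on each side, finishing the equivalence.

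For the final assertion, the explicit curvature computation made just before the statement gives
\[
R_{d_A}\,=\,0\iff\sum_{1\le k,\ell\le g}\be_i^{(k\ell)}\po A_kA_\ell\,=\,0\qquad\forall\, i\,=\,1,\,\ldots,\,h,
\]
and by the explicit presentation of $\g A_X=\g A_\be$ in \eqref{gq}, the right-hand side is precisely the condition that the algebra map $\bb T(H^0(X,\Om_X^1)^*)\to\mm{End}(E)$ determined by $A$ factors through the quotient \eqref{eq} to $\g A_X$. No genuine obstacle is anticipated: the two delicate points, already invoked in the excerpt, are the identity $H^0(X,\co_X)=K$ (needed to identify morphisms and to make the quasi-inverse canonical) and the vanishing $d\te_i=0$ from Hodge theory (needed to reduce $R_{d_A}$ to the quadratic relations above).
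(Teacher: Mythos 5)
Your proposal is correct and follows exactly the route the paper intends: the paper declares that the preceding construction of $d_A$, the curvature computation, and the identity $K=H^0(X,\co_X)$ are ``the main points of the proof'' and leaves the thorough verification to the reader, which is precisely what you have carried out (quasi-inverse via global sections, Hom-set identification via $H^0(X,\co_X)=K$, integrability via the quadratic relations defining $\g A_X$). No gaps; this is the intended argument, written out.
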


Let us now discuss tensor products. Given representations
$$A\,:\,\bb T(H^0(\Om_X^1)^*) \,\longrightarrow\, \mm{End}(E)
\ \text{ and }\ B\,:\,\bb T(H^0(\Om_X^1)^*)\,\longrightarrow\,\mm{End}(F),$$
we obtain a new representation   $A\boxtimes B\,:\,\bb T(H^0(\Om_X^1)^*)\,\longrightarrow\,\mm{End}(E\ot F)$ by putting
\[A\boxtimes B(\ph)\,=\,A(\ph)\ot\id_F+\id_E\ot B(\ph),\qquad\forall\ \ \ph\,\in\, H^0(\Om_X^1)^*.\]
This is of course only the tensor structure on the category $\modules {\bb T(H^0(\Om_X^1)^*)}$ defined by the Hopf algebra structure of $\bb 
T(H^0(\Om_X^1)^*)$ \cite[p.~58]{sweedler69}.
With this, it is not hard to see that the canonical isomorphism of $\co_X$-modules 
\[
\co_X\ot(E\ot F)\,\arou\sim\, (\co_X\ot E)\ot_{\co_X}(\co_X\ot F)
\]
is horizontal with respect to the tensor product connection on the right \cite[Section 1.1]{katz70} and the
connection $d_{A\boxtimes B}$ on the left (it is the connection induced
by the connections $d_A$ and $d_B$). We then arrive at equivalences of tensor categories: 

\begin{thm}\label{06.08.2020--3}\mbox{}
\begin{enumerate}[(i)]
\item The functor 
\[
\cv\,:\, \modules{\bb T(H^0(\Om_X^1)^*)} \,\aro\, \mctr
\]
is an equivalence of $K$-linear tensor categories.

\item The restriction 
\[ \cv\,:\, \modules {\g A_X}\,\aro\,\mictr\]
is also an equivalence of $K$-linear tensor categories. In addition, the composition 
$(\bullet|_{x_0})\circ\cv$ is naturally isomorphic to the forgetful functor, where
$\bullet|_{x_0}$ is constructed in \eqref{01.07.2020--2}.

\item The composition of the equivalence in \eqref{06.08.2020--1}
with $\cv\,:\, \modules {\g A_X}\,\aro\,\mictr$ defines a $K$-linear tensor equivalence 
\[
\modules{\g L_X}\,\arou\sim \,\mictr
\]
(see Definition \ref{25.06.2020--2} for ${\g L_X}$).
\end{enumerate}\qed
\end{thm}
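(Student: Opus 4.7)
The plan is to view the theorem as a synthesis of the material developed in Sections~2 and~3: the underlying $K$-linear equivalence of (i) was already established in the preceding proposition, and the remaining claims amount to comparing tensor structures and restricting to the integrable locus. I would organise the verification in three steps matching (i)--(iii).

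For (i), I start from the preceding proposition, which gives $\cv$ as a $K$-linear equivalence between $\modules{\bb T(H^0(\Om_X^1)^*)}$ and $\mctr$. To promote this to a tensor equivalence, I exhibit a natural isomorphism
\[
\cv(E,A)\ot_{\co_X}\cv(F,B)\,\,\simeq\,\,\cv(E\ot F,\,A\boxtimes B)
\]
in $\mctr$. The candidate is the canonical $\co_X$-linear isomorphism $(\co_X\ot E)\ot_{\co_X}(\co_X\ot F)\simeq\co_X\ot(E\ot F)$, and the only substantive point to verify is that it is horizontal with respect to the tensor product connection on the left and $d_{A\boxtimes B}$ on the right. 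Using the explicit formula $d_A(1\ot e)=\sum_i(1\ot A_ie)\ot\te_i$ and the Leibniz rule for tensor product connections \cite[Section 1.1]{katz70}, both sides evaluate to $\sum_i(A_ie\ot f+e\ot B_if)\ot\te_i$ on $1\ot(e\ot f)$; naturality in $(E,A),(F,B)$ is immediate from the formula for $A\boxtimes B$. Compatibility with the unit object and with the associativity/symmetry constraints then reduces to the corresponding statement for the cocommutative Hopf structure on $\bb T(H^0(\Om_X^1)^*)$.

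For (ii), the curvature computation carried out just before the theorem shows that a representation $(E,A)$ of $\bb T(H^0(\Om_X^1)^*)$ yields an integrable connection if and only if the relations cutting out $\g A_X$ from $\bb T(H^0(\Om_X^1)^*)$ are satisfied; hence $\cv$ restricts to a functor $\modules{\g A_X}\to\mictr$, which is fully faithful and essentially surjective by the equivalence from (i). The tensor structure restricts because $\g I_\be$ is a Hopf ideal, so that the quotient $\g A_X$ is a Hopf algebra and \eqref{eq} is a morphism of Hopf algebras. For the last assertion of (ii), the fibre of $(\co_X\ot E,d_A)$ at $x_0$ is canonically identified with $E$, and a morphism in $\mictr$ lying over a horizontal $\co_X$-linear map has fibre at $x_0$ equal to the underlying $K$-linear map of representations; this yields the claimed natural isomorphism between $(\bullet|_{x_0})\circ\cv$ and the forgetful functor to $\bb{vect}$.

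For (iii), I compose (ii) with the tensor equivalence $\modules{\g L_X}\arou\sim\modules{\g A_X}$ of \eqref{06.08.2020--1}, which is available thanks to Proposition \ref{12.01.2021--1} identifying $\g A_X$ with the universal enveloping algebra of $\g L_X$. The main subtle point in the whole argument is the tensor-compatibility of the isomorphism in (i), including checking the associativity and unit coherences; everything else is routine bookkeeping, and I do not expect real obstacles beyond that.
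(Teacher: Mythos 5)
Your proposal is correct and follows essentially the same route as the paper, which proves the theorem precisely by the discussion preceding its statement: the $K$-linear equivalence from the earlier proposition, the horizontality of the canonical isomorphism $\co_X\ot(E\ot F)\simeq(\co_X\ot E)\ot_{\co_X}(\co_X\ot F)$ with respect to $d_{A\boxtimes B}$ and the tensor-product connection, the curvature computation identifying $\modules{\g A_X}$ with $\mictr$, and composition with the equivalence \eqref{06.08.2020--1}. No gaps.
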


Making use again of the main theorem of (categorical) Tannakian theory, \cite[p.~130, Theorem 2.11]{deligne-milne82},
we obtain an equivalence of $K$-linear tensor categories: 
\begin{equation}\label{06.08.2020--2}
\bullet|_{x_0} \,\,:\,\, \mictr\,\arou\sim \,\bb{Rep}\,\Te(X,\,x_0) , 
\end{equation}
where $\Te(X,\,x_0)$ is a group scheme.
In addition, the inclusion map $$\mictr\,\longrightarrow\, \mic$$ defines a morphism 
\[
\bb q_X\,:\,\Pi(X,\,x_0)\,\aro\,\Te(X,\,x_0),
\]
where $\Pi(X,\,x_0)$ and $\Te(X,\,x_0)$ are constructed in \eqref{01.07.2020--2} and
\eqref{06.08.2020--2} respectively.
Along the lines of Proposition 3.1 of \cite{biswas-et-al21}, we have: 

\begin{prp}\label{30.06.2020--2}
The above morphism $\mathbf q_X$ is in fact a quotient morphism. 
\end{prp}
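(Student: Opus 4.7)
My plan is to apply the Tannakian criterion of \cite[Proposition~2.21(a)]{deligne-milne82}: a morphism of affine $K$-group schemes is a quotient (faithfully flat) if and only if the corresponding pullback functor on representation categories is fully faithful and has essential image closed under passage to subobjects. Via the equivalences \eqref{01.07.2020--2} and \eqref{06.08.2020--2}, the pullback along $\mathbf q_X$ is identified with the full inclusion $\mictr \hookrightarrow \mic$. Full faithfulness being automatic, the whole argument reduces to the following subobject claim: whenever $(\ce,\na) \in \mictr$ and $(\cm,\na')\subset(\ce,\na)$ is a subobject in $\mic$, the sheaf $\cm$ is a trivial vector bundle.

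For the subobject claim, note first that $\cm$ is locally free by \cite[Proposition~8.8]{katz70}, since it inherits the integrable connection $\na'$. Thus $\cm$ is a subbundle of $\ce\cong\co_X^{\oplus r}$. Writing $\ce = \co_X \ot E$ and setting $M := H^0(X,\cm)$, which sits inside $H^0(X,\ce) = E$, the inclusion $M \hookrightarrow E$ induces via evaluation an injective map $\co_X \ot M \hookrightarrow \cm \hookrightarrow \co_X \ot E$. The remaining task --- and the crux --- is to upgrade this to an equality $\co_X \ot M = \cm$, for then $\cm$ is exhibited as a trivial bundle of rank $\dim_K M$ and the claim follows.

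To obtain the equality I would adapt the argument in \cite[Proposition~3.1]{biswas-et-al21}. The key inputs are: (i) the integrable connection $\na'$, which over a smooth proper $K$-scheme in characteristic zero forces the Chern classes of $\cm$ to vanish numerically; and (ii) restriction of the situation to a general smooth projective curve $C\subset X$ cut out by a Bertini-type complete intersection of sufficiently ample hyperplane sections. On such a curve, a subbundle of $\co_C^{\oplus r}$ carrying a connection must have degree zero, and an induction on the rank combined with the elementary observation that a degree-zero line subbundle of a trivial bundle on $C$ is trivial shows that $\cm|_C$ is generated by its global sections, giving $\cm|_C = (\co_X \ot M)|_C$. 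Finally, the flatness provided by $\na'$ is used to propagate this coincidence from a general curve back to the whole of $X$.

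The principal obstacle is precisely this last propagation step, i.e., translating the equality $\cm|_C = (\co_X \ot M)|_C$ on a general complete intersection curve into the equality $\cm = \co_X \ot M$ on $X$ itself. The rigidity imposed by the integrable connection $\na'$ is the essential ingredient here, and it is exactly the mechanism exploited in \cite[Proposition~3.1]{biswas-et-al21} to complete the argument.
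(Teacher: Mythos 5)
Your Tannakian reduction is exactly right and matches the paper's: by \cite[Proposition 2.21(a)]{deligne-milne82} everything comes down to showing that $\mictr$ is stable, inside $\mic$, under passage to subobjects (the paper phrases this dually, via quotient objects, which is equivalent since both categories are rigid). But the geometric heart of the argument --- that a subobject $\cm$ of a trivial bundle with integrable connection is again trivial --- is precisely what you leave unproved, and your sketch of it has two problems. First, the mechanism you point to for the ``propagation'' step is misidentified: the integrable connection is \emph{not} what carries the information from curves back to $X$. Its only role is to guarantee, via \cite[Remark 3.3]{biswas-subramanian06}, that the restriction of the relevant bundle to \emph{every} proper curve $\ga\,:\,C\,\aro\,X$ has degree zero; no further ``rigidity'' or ``flatness'' of $\na'$ is invoked afterwards. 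Second, a single general Bertini curve does not suffice --- you need all curves, or at least enough to connect any two closed points of $X$.

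The paper's actual argument closes the gap as follows. Working with the quotient $\cq$ of $\ce\,\simeq\,\co_X^{\oplus e}$ (for your subobject $\cm\,\subset\,\ce$, apply this to $\ce^\vee\,\aro\,\cm^\vee$), the surjection $\co_X^{\oplus e}\,\aro\,\cq$ defines a classifying morphism $f\,:\,X\,\aro\,G$ to a Grassmannian with $f^*\cu\,=\,\cq$. For every proper curve $\ga\,:\,C\,\aro\,X$ the bundle $\ga^*\cq$ carries a connection, hence has degree zero, so $(f\circ\ga)^*\det\cu$ has degree zero; since $\det\cu$ is very ample on $G$, the map $f\circ\ga$ is constant. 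Ramanujam's Lemma (any two closed points of $X$ lie on the image of a common proper curve) then forces $f$ itself to be constant, so $\cq\,=\,f^*\cu$ is trivial. This replaces both your rank induction on curves and the unexplained propagation step by one global statement about the classifying map; without something of this kind (or a genuine import of the full proof of \cite[Proposition 3.1]{biswas-et-al21}), your argument is incomplete at its central point.
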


\begin{proof}
Let $\mathscr E\,\aro\,\mathscr Q$ be an epimorphism of $\mic$ with $\mathscr E\,\in\,\mictr$; write $e$ for the rank of $\ce$ and $q$
for that of $\mathscr Q$.

Let $G$ stand for the Grassmannian variety of $q$--dimensional quotients of $K^{\oplus e}$, and let $\co_G^{\oplus e}
\,\aro\,\cu$ stand for the universal epimorphism \cite[5.1.6]{nitsure05}. We then obtain a morphism $f\,:\,X\,\aro
\, G$ such that $f^*\cu\,=\,\cq$. For each projective curve $\ga\,:\,C\,\aro\, X$, the vector bundle $\ga^*\cq\,=\
(f\circ\ga)^*\cu$, which carries a connection, has degree zero \cite[Remark 3.3]{biswas-subramanian06}; in particular, $(f\circ\ga)^*\det\cu$ has
also degree zero. As $\det\cu$ is a very ample invertible sheaf on $G$ \cite[p. 114]{nitsure05}, from $\text{degree}((f\circ\ga)^*\det\cu)\,=\, 
\text{degree}((f\circ\ga)^*\cu)\,=\,0$ we conclude that $(f\circ\ga)^*\det\cu$ is trivial, and hence 
the schematic image of $f\circ\ga$ is a (closed) point of $G$ \cite[p.~331, Exercise 8.1.7(a)]{liu02}. Now, Ramanujam's 
Lemma (see Remark \ref{ramanujam_lemma} below) can be applied to show that any two closed points $x_1$ and $x_2$ of 
$X$ belong to the image of a morphism $\ga\,:\,C \,\aro\, X$ from a projective curve.
Therefore, the schematic image of $X$ under $f$ is a single point (necessarily closed) and hence $f$ factors as $X\,\aro\,\spc K'\,\aro\, G$, with $K'$ a finite extension of $K$. Since $H^0(X,\,\co_X)\,=\,K$, it  must be the case that $K'=K$,  and hence $f$ factors through the structural morphism $X\,\aro\,\spc K$. Consequently, $f^*\cu
\,=\, \mathscr Q$ is a trivial vector bundle. The standard criterion for a morphism of group schemes
to be a quotient morphism (see \cite[p.~139, Proposition 2.21(a)]{deligne-milne82} for the
criterion) can be applied to complete the proof.
\end{proof}

\begin{rmk}[{Ramanujam's Lemma}]\label{ramanujam_lemma}
Let $Z$ be a geometrically integral projective $K$-scheme and $z_1,\,z_2$   two closed points on it. 
We contend that there exists a projective curve $C$ together with a morphism $\ga\,:\,C
\,\longrightarrow\, X$ such that $z_1$ and $z_2$ belong to the image of $\ga$. The proof is the same
as in \cite[p.~56]{mumford70}, but the Bertini theorem necessary for our
purpose comes from \cite[Cor. 6.11]{jouanolou83}. 

If $\dim Z\,=\,1$, it is sufficient to chose $C$ to be the normalisation of $Z$. Let $\dim Z\,:=\,d\ge2$ and 
suppose that the result holds for all geometrically integral and projective schemes of dimension strictly smaller 
than $d$. We only need now to find a geometrically integral closed subscheme $Y\,\subset\, Z$ containing $z_1$ and 
$z_2$ and having dimension strictly smaller than $d$. Let $\pi\,:\,Z'\,\longrightarrow\, Z$ be the blow up of
the closed subscheme $\{z_1,\,z_2\}$. Note that $Z'$ is geometrically
integral \cite[8.1.12(c) and (d), p.~322]{liu02}. In addition, the 
fibres of $\pi$ above $z_1$ and $z_2$ are Cartier subschemes of $Z'$ and hence of dimension at least 1
\cite[2.5.26, p.~74]{liu02}. Let $Z'\,\longrightarrow\,\PP^N$ be a closed immersion, and
let $H\,\subset\,\PP^N$ be a hyperplane       
such that
\begin{itemize}
\item $Z'\cap H$ is geometrically integral \cite[6.11 (2)-(3)]{jouanolou83} of dimension 
$\dim Z-1$, and

\item $Z'\cap\pi^{-1}(z_i)\,\not=\,\emptyset$, loc. cit, (1)-(b).
\end{itemize}
Then, the schematic 
image of $\pi\,:\,Z'\cap H\,\longrightarrow\, Z$ is the scheme $Y$ that we are seeking.
\end{rmk}

\begin{rmk}In \cite[Ch. 8, p.~331, Exercise 1.5]{liu02}, the reader shall find a useful, but slightly
weaker version of Ramanujam's Lemma. 
\end{rmk}

\begin{rmk}The idea to consider certain connections as representations of a Lie algebra can be found at least on 
\cite[12.2--5]{deligne87}.
\end{rmk}

\section{The Tannakian envelope of a Lie algebra}

All that follows in this section is, in essence, due to Hochschild \cite{hochschild59}; since he expressed himself without using group 
schemes and his ideas are spread out in several papers, we shall briefly condense his theory in what follows. The reader should also consult 
\cite{nahlus02}, where some results reviewed here also appear. 

Our objective in this section is to give a construction of the affine envelope of a Lie algebra. One can, of course, 
employ the categorical Tannakian theory \cite[p.~130, Theorem 2.11]{deligne-milne82} to the category $\modules {\g 
L}$ to obtain such a construction, but we prefer to draw the reader's attention to something which is less 
widespread than \cite{deligne-milne82} and more concrete.

Let $\g L$ be a Lie algebra with universal enveloping algebra $\bb U\g L$. Note that $\bb U\g L$ is not only an 
algebra, but also a cocommutative Hopf algebra; see \cite[p.~58, Section 3.2.2]{sweedler69} and \cite[p.~72, Example 
1.5.4]{montgomery93}. Consequently, the Hopf dual \[(\bb U\g L)^\circ \,=\,\left\{\ph:\bb U\g L\to K\,:\,\begin{array}{c}\text{$\ph$ vanishes on a subspace } \\ \text{ of finite codimension}\end{array}\right\}\]   is a commutative Hopf 
algebra (see \cite[Section 6.2, pp. 122-3]{sweedler69} or \cite[Theorem 9.1.3]{montgomery93}). This means that
\[
{\mathbf G}(\g L)\,:=\,\spc\,(\bb U\g L)^\circ
\]
is a group scheme, which we call the {\it affine envelope of $\g L$}. Let us show that this construction gives a left
adjoint to the functor 
\[
{\rm Lie}\,:\,\bb{GS}\,\aro\, \bb{LA}.
\]

We start by noting that $\bb G$ is indeed a functor; given a morphism of Lie algebras $\g G\,\longrightarrow\, \g H$, the associated 
morphism  $ \bb U\g G \,\longrightarrow\,  \bb U\g H $ gives rise to a map of coalgebras $(\bb U\g 
H)^\circ\,\longrightarrow\, (\bb U\g G)^\circ$; see \cite[p.~114, Remark 1]{sweedler69}. The fact that the algebra 
structures are also preserved is   a consequence of the fact that $\bb U\g G\,\longrightarrow\,\bb U\g H$ is 
also an arrow of coalgebras.

Let $G$ be a group scheme, and let $\rho\,:\,\g L\,\longrightarrow\, {\rm Lie}\, G$ be a morphism of Lie algebras; we write $\rho$ for the map
induced between universal enveloping algebras as well. Interpreting elements in ${\rm Lie}\,G$ as elements of
${\rm End}_K(\co(G))$, we obtain a morphism of $K$-algebras $\bb U({\rm Lie}\,G)\,\longrightarrow\, {\rm End}_K
(\co(G))$. To continue, we recall that a module over an algebra   is   {\it locally finite} if it is the union of finite dimensional submodules. In addition, such a notion is easily adapted to include modules over Lie algebras or comodules over coalgebras. 
Because of \cite[3.3 Theorem]{waterhouse79}, the $\co(G)$-comodule $\co(G)$ is locally finite and hence is also   locally finite as a ${\rm Lie}\,G$--module,  $\g L$--module or     $\bb U\g L$--module. Let
\[
\ph_\rho\,:\, \co(G)\,\aro\, (\bb U\g L)^*
\]
be defined by 
\begin{equation}\label{pra}
\ph_\rho(a)\,:\,u\,\longmapsto\,\ep(\rho(u)(a)),
\end{equation}
where
\begin{equation}\label{pra2}
\ep\,:\,\co(G)\,\longrightarrow\,K
\end{equation}
is the counit and $u\, \in\, \bb U\g L$. 

\begin{lem}\mbox{}
\begin{enumerate} [(1)]
\item For each $a\in \co(G)$, the element $\ph_\rho(a)$ in \eqref{pra} lies in the Hopf dual $(\bb U\g L)^\circ$.

\item The map $\ph_\rho$ is a morphism of Hopf algebras.
\end{enumerate}
\end{lem}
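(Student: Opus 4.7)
The plan is to exploit the fact that $\rho:\g L\aro \mm{Lie}\,G$ sends each element of $\g L$ to a left-invariant derivation of the Hopf algebra $\co(G)$, so that the induced algebra map $\rho:\bb U\g L\aro \mm{End}_K(\co(G))$ lands inside the subalgebra of left-invariant operators and makes $\co(G)$ into a $\bb U\g L$-module algebra. The module-algebra identity $u(ab)=\sum u_{(1)}(a)\po u_{(2)}(b)$ is verified first on $x\in\g L$, where it is just the Leibniz rule combined with $\De x=x\ot 1+1\ot x$, and then extends to all of $\bb U\g L$ because $\De$ is an algebra morphism and derivations are closed under composition in the module-algebra sense.

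For part (1), I would invoke the local finiteness of $\co(G)$ as a $\bb U\g L$-module noted in the paragraph preceding the lemma. The annihilator of the finite-dimensional submodule $\bb U\g L\po a$ is a two-sided ideal $\g I\subset\bb U\g L$ of finite codimension. For every $u\in\g I$ one has $\rho(u)(a)=0$ and therefore $\ph_\rho(a)(u)=\ep(0)=0$. Thus $\ph_\rho(a)$ vanishes on a cofinite two-sided ideal, which is the standard criterion for membership in $(\bb U\g L)^\circ$.

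For part (2), I would split the verification into algebra and coalgebra parts, relying on the fact that a bialgebra morphism between Hopf algebras automatically commutes with the antipodes. The algebra part is routine: since $u(1_{\co(G)})=\ep_{\bb U\g L}(u)\po 1_{\co(G)}$, one gets $\ph_\rho(1_{\co(G)})=\ep_{\bb U\g L}$, which is the unit of $(\bb U\g L)^\circ$; and the module-algebra identity combined with $\ep:\co(G)\aro K$ being an algebra map yields
\[
\ph_\rho(ab)(u)\,=\,\ep(u(ab))\,=\,\sum\ep(u_{(1)}(a))\ep(u_{(2)}(b))\,=\,(\ph_\rho(a)\po\ph_\rho(b))(u).
\]

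The coalgebra part is where I expect the main obstacle, because it is the only step that genuinely uses left invariance rather than just the module-algebra structure. The clean formulation is that for any left-invariant operator $D$ on $\co(G)$, the counit axiom gives the representation $D(a)=\sum a_{(1)}\po\ep(D(a_{(2)}))$, and so for two left-invariant operators $D,D'$ a direct substitution produces the convolution identity
\[
\ep(DD'(a))\,=\,\sum\ep(D(a_{(1)}))\po\ep(D'(a_{(2)})).
\]
Applied to $D=\rho(u)$ and $D'=\rho(v)$, this reads $\ph_\rho(a)(uv)=\sum\ph_\rho(a_{(1)})(u)\po\ph_\rho(a_{(2)})(v)$, which is exactly the compatibility of $\ph_\rho$ with the comultiplications of $\co(G)$ and $(\bb U\g L)^\circ$. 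The counit identity $\ph_\rho(a)(1_{\bb U\g L})=\ep(a)$ is immediate from the definition.
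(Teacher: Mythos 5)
Your proposal is correct and follows essentially the same route as the paper: part (1) is the same cofinite-annihilator argument, and part (2) uses the same three ingredients (Leibniz rule plus $\De x=x\ot1+1\ot x$ on $\g L$ extended multiplicatively for compatibility with products, left-invariance of the operators $\rho(u)$ for compatibility with coproducts, and the automatic antipode compatibility of bialgebra maps). Your packaging of the multiplication step as the module-algebra identity $u(ab)=\sum u_{(1)}(a)\,u_{(2)}(b)$ is just a cleaner statement of the induction the paper carries out directly on $\ph(ab)(u)$.
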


\begin{proof}(1) Because $\co(G)$ is a locally finite $\bb U\g L$--module, as explained above, the element  $a$ belongs to a finite dimensional subspace $V$ which is stable under $\bb U\g L$.  
Let $I\,\subset\,\bb U\g L$ be the kernel of the induced map of $K$-algebras 
$\bb U\g L\,\stackrel\rho\longrightarrow\,\,\bb U(\mm{Lie}\,G)\,\longrightarrow\,\mm{End}(V)$; it follows that $I\,\subset\, \mm{Ker}\,\ph_\rho(a)$ and $\ph_\rho(a)\,\in\,(\bb U\g L)^\circ$. 

(2) This verification is somewhat lengthy, but straightforward once the right path has been found. We shall only indicate the most important ideas. Let us write $\ph$ instead of $\ph_\rho$ and consider elements of $\bb U\g L$ as $G$-invariant linear operators \cite[Section 12.1]{waterhouse79} on $\co(G)$. In what follows, we shall use freely the symbol $\Delta$ to denote comultiplication on different coalgebras. 

{\it Compatibility with multiplication.} We must show that
\begin{equation}\label{ets}
 [\ph(a)\ot\ph (b)]\left(\Delta (u)\right)\,=\,\ph(ab)(u)
\end{equation}
for all $a,\,b\,\in\,\co(G)$ and $u\,\in\,\bb U\g L$. Obviously, eq. \eqref{ets} holds for $u\,\in\, K\,\subset\,
\bb U\g L$. In case $u\,\in\,\g L$, the validity of eq. \eqref{ets} is an easy consequence of the fact that
$u\,:\,\co(G)\,\longrightarrow\,\co(G)$ is a derivation and $\Delta u\,=\,u\ot1+1\ot u$.
We then prove that if eq. \eqref{ets} holds for $u$ then, for any given $\delta\,\in\,\g L$, eq. 
\eqref{ets} holds for $u\delta$. Since $\bb U\g L$ is generated by $\g L$, we are done. 

{\it Compatibility with comultiplication.} For $\zeta\,\in\,(\bb U\g L)^\circ$, we know that $\Delta_{(\bb U\g L)^\circ}
(\zeta)$ is defined by 
\[
u\ot v\,\longmapsto\,\zeta(uv)
\]
for $u,\, v\,\in\,\bb U\g L$.
We need to prove that $$\ep (uv(a))\,=\,(\ph\ot\ph)\circ\Delta a$$ for every triple $u,\, v\,\in\,\bb U\g L$ and
$a\,\in\,\co(G)$, where $\ep$ is the homomorphism in
\eqref{pra2}. This follows from the invariance formulas $\Delta u\,=\,(\id\ot u)\Delta$.

{\it Compatible with unit  and co-unit.} This is much simpler and we omit its verification. 

{\it Compatibility with antipode.} Since $\ph$ respects multiplication and comultiplication, unit and co-unit, it is a morphism of  bialgebras. Now, \cite[Lemma 4.0.4]{sweedler69} guarantees that $\ph$ is compatible with the antipode.
 \end{proof}

\begin{prp}\label{30.06.2020--1}The above construction establishes a bijection
\begin{eqnarray*}
\ph\,\,:\,\,\hh{\bb{LA}}{\g L}{\,\,{\rm Lie}\,G}&\,\aro\,&\hh{\bb{Hpf}}{\co(G)}{\,\,(\bb U\g L)^\circ}\\
&& = \hh{\bb{GS}}{{\mathbf G}(\g L)}{\,\, G}, 
\end{eqnarray*}
rendering $\bb G\,:\,\bb{LA}\,\longrightarrow\,\bb{GS}$ a left adjoint to ${\rm Lie}\,:\,\bb{GS}
\,\longrightarrow\,\bb{LA}$. 
\end{prp}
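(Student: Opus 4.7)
The plan is to build an explicit inverse to $\ph$; the second equality in the statement is the standard antiequivalence between commutative Hopf algebras and affine group schemes, so the heart of the matter is to show that $\rho\mapsto\ph_\rho$ is bijective.

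The cornerstone of the inverse is a natural ``unit'' morphism $\eta\,:\,\g L\,\aro\,{\rm Lie}\,\mathbf G(\g L)$. For $u\in\g L$, let $\eta(u)\in((\bb U\g L)^\circ)^*$ be evaluation at $u$. Primitivity of $u$ in $\bb U\g L$ gives
\[
\eta(u)(\zeta_1\zeta_2)\,=\,(\zeta_1\ot\zeta_2)(\De u)\,=\,\ep(\zeta_1)\eta(u)(\zeta_2)+\eta(u)(\zeta_1)\ep(\zeta_2),
\]
so $\eta(u)$ is an $\ep$-derivation on $(\bb U\g L)^\circ$, i.e., $\eta(u)\in{\rm Lie}\,\mathbf G(\g L)$; the identity $\eta([u,v])=[\eta(u),\eta(v)]$ is dual to $[u,v]=uv-vu$ in $\bb U\g L$. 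Given a Hopf morphism $\ps\,:\,\co(G)\,\aro\,(\bb U\g L)^\circ$, with associated $\spc\ps\,:\,\mathbf G(\g L)\,\aro\, G$, I would set
\[
\rho_\ps\,:=\,({\rm Lie}\,\spc\ps)\circ\eta\,:\,\g L\,\aro\,{\rm Lie}\, G;
\]
unwinding, $\rho_\ps(u)$ is the $\ep$-derivation $a\mapsto\ps(a)(u)$ on $\co(G)$.

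The next step is to check that $\ps\mapsto\rho_\ps$ inverts $\ph$. For $\rho_{\ph_\rho}=\rho$: $\rho_{\ph_\rho}(u)$ is by construction the tangent vector $a\mapsto\ph_\rho(a)(u)=\ep(\rho(u)(a))$, which is exactly the tangent vector at the identity underlying $\rho(u)\in{\rm Lie}\, G$. For $\ph_{\rho_\ps}=\ps$: both sides agree on $\g L\subset\bb U\g L$ by construction. Since $\ph_{\rho_\ps}$ and $\ps$ are both morphisms of coalgebras from $\co(G)$ to $(\bb U\g L)^\circ$, iterated use of the identity
\[
\ph(a)(uv)\,=\,\sum\ph(a_{(1)})(u)\po\ph(a_{(2)})(v)
\]
together with induction on the PBW filtration of $\bb U\g L$ propagates the agreement to all of $\bb U\g L$.

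Naturality in $G$, needed for the full adjointness statement, is immediate from the defining formula \eqref{pra}. I expect the PBW-induction step to be the most delicate part, demanding care in the interplay of multiplication in $\bb U\g L$ and comultiplication in $(\bb U\g L)^\circ$; the rest reduces to keeping straight the three incarnations of ${\rm Lie}\, G$ -- as $\ep$-derivations $\co(G)\to K$, as left-invariant derivations of $\co(G)$, and as tangent vectors at the identity.
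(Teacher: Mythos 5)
Your proposal is correct and follows essentially the same route as the paper: the inverse is the map sending a Hopf morphism $\ps$ to $x\mapsto(a\mapsto\ps(a)(x))$, the identity $\rho_{\ph_\rho}=\rho$ is the standard dictionary between invariant derivations and $\ep$-derivations, and $\ph_{\rho_\ps}=\ps$ is propagated from $K\oplus\g L$ to all of $\bb U\g L$ by induction on products of elements of $\g L$ using the coalgebra-morphism identity $\ph(a)(uv)=\sum\ph(a_{(1)})(u)\,\ph(a_{(2)})(v)$ (valid for $\ph_{\rho_\ps}$ by the preceding lemma). Your packaging of the inverse as $({\rm Lie}\,\spc\ps)\circ\eta$ via the unit $\eta$ is a slightly tidier way to see that it is a Lie algebra morphism, but the substance is identical.
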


\begin{proof} 
We construct the inverse of $\ph$ and leave the reader with all verifications.
Let $f\,:\,\co(G)\,\longrightarrow\,(\bb U\g L)^\circ$ be a morphism of Hopf algebras. Let $x\,\in\,\g L$ be given, and define
\begin{equation}\label{08.06.2021--1}
\psi_f(x)\,:\, \co(G)\aro K,\quad a\,\longmapsto\, f(a)( x).
\end{equation}
It is a simple matter to show that $\psi_f(x)$ is an $\varepsilon$-derivation (see \cite[12.2]{waterhouse79} for the definition),  which is then interpreted as an 
element of ${\rm Lie}\, G$ in a standard fashion. In addition, $\psi_f\,:\,\g 
L\,\longrightarrow\,{\rm Lie}\, G$ gives a morphism of Lie algebras (the reader might use the bracket as explained 
in \cite[Section 12.1, p.~93]{waterhouse79}). Then $f\,\longmapsto\,\ps_f$ and $\rho\,\longmapsto\,\ph_\rho$ are 
mutually inverses; the verification of this fact consists of a chain of simple manipulations and we contend 
ourselves in giving some elements of the equations to be verified. That $\ps_{\ph_\rho}\,=\,\rho$ is
in fact immediate. On the 
other hand, the verification of
\[\ph_{\ps_f}(a)(u)\,=\,f(a)(u),\qquad\forall\ \,a\,\in \,\co(G),\,\forall\ \,\,u\,\in\,\bb U\g L\] 
requires the ensuing observations. (We shall employ Sweedler's notation for the Hopf algebra $\co(G)$ \cite[Section 
1.2, 10ff]{sweedler69}.)
\begin{enumerate}[(1)]
\item For $\delta\,\in\,\g L$, the derivation $\co(G)\,\longrightarrow\,\co(G)$ associated to $\ps_f(\delta)$ is
determined by $a\,\longmapsto\, \sum_{(a)} a_{(1)}\po [f(a_{(2)})(\delta)]$.
\item The axioms for the coproduct and co-unit show that $\sum_{(a)}\ep(a_{(1)})a_{(2)}\,=\,a$. 
\item Suppose that for $u\,\in\,\bb U\g L$ and $\de\,\in\,\bb U\g L$ we know that, for all $a\,\in\,\co(G)$,
\[
\ph_{\ps_f}(a)(u)\,=\, f(a)(u)\quad\text{ and }\quad \ph_{\ps_f}(a)(\delta)\,=\, f(a)(\delta). 
\]
Then $\ph_{\ps_f}(a)(u\delta)\,=\,f(a)(u\delta)$ because of the equations 
\[
f(a)(xy)\,=\,\sum_{(a)} f(a_{(1)})(x)\po f(a_{(2)})(y),\qquad\forall\ \,\,x,\,y\,\in\,\bb U\g L,
\]
which are a consequence of the fact that $f$ is a map of Hopf algebras.
\end{enumerate}
This completes the proof.
\end{proof}

In the proof of Proposition \ref{30.06.2020--1} we defined a bijection 
\[
\ps\,\,:\,\,\hh{\bb{GS}}{\bb G(\g L)}{G}\,\aro \,\hh{\bb{LA}}{\g L}{{\rm Lie}\,G} 
\]
by means of eq. \eqref{08.06.2021--1}. 
In case $G\,=\,\bb{GL}(V)$ and
in the light of the identification ${\rm Lie}\,\bb{GL}(V)\,=\,\g{gl}(V)$, $\ps$ has a rather useful description. Let
$f\,:\,\bb G(\g L)\,\longrightarrow\,\bb{GL}(V)$ be a representation and let $c_f:V\to V\ot (\bb U\g L)^\circ$ be the associated comodule morphism. 
It then follows that 
\begin{equation}\label{10.06.2021--1}
(\id_V\ot \text{evaluate at $x$})\circ c_f \,=\, \ps_f(x).
\end{equation}

\begin{cor}\label{07.06.2021--2}Let $V$ be a finite dimensional vector space and
$f\,:\, \bb G(\g L)\,\longrightarrow\,\bb{GL}(V)$ a representation. Write $\ps_f\,: \,\g L\,\longrightarrow\, \g{gl}(V)$
for the morphism of Lie algebras mentioned above. Then, this gives rise to a $K$-linear equivalence of tensor categories \[\bb{Rep}\,\bb{G}(\g L)\aro \modules{\g L} .\]
\end{cor}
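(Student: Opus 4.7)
The plan is to repackage the adjunction bijection of Proposition \ref{30.06.2020--1} into an explicit functor $F\,:\,\bb{Rep}\,\bb G(\g L)\aro\modules{\g L}$ and then verify that it is a tensor equivalence. On objects, $F$ sends $(V,\,f)$ to the vector space $V$ endowed with the Lie algebra action $\ps_f\,:\,\g L\aro\g{gl}(V)$ produced by Proposition \ref{30.06.2020--1}; on morphisms, $F$ is the identity on underlying $K$-linear maps. Eq.~\eqref{10.06.2021--1} identifies $\ps_f(x)$, for $x\,\in\,\g L$, with the composition of the coaction $c_f\,:\,V\aro V\ot(\bb U\g L)^\circ$ with the linear form ``evaluate at $x$''.

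Faithfulness of $F$ is clear. For fullness, let $u\,:\,V\aro W$ be a $K$-linear $\g L$-equivariant map; we must show that $(u\ot\id)\circ c_f\,=\,c_g\circ u$. The formula $\rho_f(u')\,=\,(\id_V\ot\mm{ev}_{u'})\circ c_f$ defines, via coassociativity of $c_f$, a $\bb U\g L$-module structure on $V$ extending $\ps_f$, and similarly for $W$. Since $\g L$ generates $\bb U\g L$ as an associative algebra, an induction using the multiplicativity relation
\[
f(a)(xy)\,=\,\sum_{(a)}f(a_{(1)})(x)\po f(a_{(2)})(y)
\]
recorded in the proof of Proposition \ref{30.06.2020--1} shows that $u$ intertwines $\rho_f(u')$ and $\rho_g(u')$ for every $u'\,\in\,\bb U\g L$; passing to $W\ot(\bb U\g L)^\circ$ via the separating pairing then gives the desired comodule identity. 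For essential surjectivity, take $V\,\in\,\modules{\g L}$ and extend the action to $\rho\,:\,\bb U\g L\aro\mm{End}(V)$. Since $\mm{End}(V)$ is finite dimensional, $\ker\rho$ is a cofinite ideal of $\bb U\g L$, so upon fixing a basis $\{e_i\}$ of $V$ the matrix coefficients $\rho_{ji}\,:\,u\,\longmapsto\,(\rho(u)e_i)_j$ belong to $(\bb U\g L)^\circ$; the assignment $e_i\,\longmapsto\,\sum_j e_j\ot\rho_{ji}$ then defines a $(\bb U\g L)^\circ$-comodule structure on $V$, hence a representation $f$ of $\bb G(\g L)$, and \eqref{10.06.2021--1} yields $\ps_f\,=\,\rho|_{\g L}$.

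To verify that $F$ is a tensor functor, recall that the coaction on $V\ot W$ associated with the tensor product representation $f\ot g$ is induced by the multiplication of $(\bb U\g L)^\circ$, which is dual to the comultiplication $\Delta$ of $\bb U\g L$. For $x\,\in\,\g L$, the primitivity $\Delta x\,=\,x\ot 1+1\ot x$ combined with \eqref{10.06.2021--1} yields
\[
\ps_{f\ot g}(x)\,=\,\ps_f(x)\ot\id_W+\id_V\ot\ps_g(x),
\]
which matches exactly the tensor product of $\g L$-modules featured in \eqref{06.08.2020--1}. The unit object and the associativity and symmetry constraints are verified by the same mechanism, using cocommutativity of $\bb U\g L$ for symmetry.

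The main obstacle is the fullness step: passing from the mere $\g L$-equivariance of $u$ to its $(\bb U\g L)^\circ$-colinearity requires one to extend the comparison through the whole associative algebra $\bb U\g L$, which is where cocommutativity of $\bb U\g L$ and the Hopf algebra compatibility of the coactions $c_f,\,c_g$ enter decisively. Everything else reduces to bookkeeping around the explicit formula \eqref{10.06.2021--1}.
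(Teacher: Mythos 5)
Your proof is correct, and at the decisive step it takes a genuinely different route from the paper's. The paper establishes full faithfulness by a conjugation trick: it encodes a linear map $T\,:\,V\,\longrightarrow\, W$ as the unipotent element $\widehat T\,=\,\begin{pmatrix}I&0\\ T&I\end{pmatrix}$ of $\GL(V\op W)$, observes that equivariance of $T$ (for the group scheme, respectively for the Lie algebra) is the fixed-point condition for conjugation by $\widehat T$ on $\mm{diag}(f,g)$, respectively on $\mm{diag}(\rho,\si)$, and then transports one condition to the other through the naturality of the adjunction bijection $\ps$ of Proposition \ref{30.06.2020--1}. You instead promote the $\g L$-equivariance of $u$ to $\bb U\g L$-equivariance (immediate, since $\g L$ generates $\bb U\g L$ and the operators $\rho_f(u')\,=\,(\id_V\ot\mm{ev}_{u'})\circ c_f$ form an algebra map because the comultiplication of $(\bb U\g L)^\circ$ is dual to the multiplication of $\bb U\g L$), and then recover colinearity of the coactions from the fact that the evaluation pairing of $(\bb U\g L)^\circ$ against $\bb U\g L$ is separating. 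Both arguments are sound; the paper's only ever invokes $\ps$ as a black box, whereas yours works directly with comodules and thereby also delivers essential surjectivity (matrix coefficients vanishing on the cofinite ideal $\ker\rho$), a point the paper's proof passes over in silence. The treatment of the tensor structure via primitivity of $x\,\in\,\g L$ and eq. \eqref{10.06.2021--1} is the same in both. One small correction to your closing remark: cocommutativity of $\bb U\g L$ plays no role in the fullness step --- what you actually use there is the duality between comultiplication on $(\bb U\g L)^\circ$ and multiplication on $\bb U\g L$; cocommutativity matters only for $(\bb U\g L)^\circ$ to be commutative in the first place and for the symmetry constraint.
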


\begin{proof}
To define a functor $\bb{Rep}\,\bb{G}(\g L)\,\longrightarrow\, \modules{\g L}$ it is still necessary to 
define the maps between sets of morphisms.

Let $f\,:\,\bb G(\g L)\,\longrightarrow\,\bb{GL}(V)$ and $g\,:\,\bb G(\g L)\,\longrightarrow\,\bb{GL}(W)$ be
representations, and let $T\,\in\,\hh{K}{V}{\,W}$. 
We shall show that $T\,\in\,\hh{\bb G(\g L)}{V}{\,W}$ if and only if $T\,\in\,\hh{\g L}{V}{\,W}$.

Consider $\widehat T\,=\,\begin{pmatrix}I&0\\T&I\end{pmatrix}\,\in \,\GL(V\op W)$ and denote by
$$C_T\,\,:\,\,\bb{GL}(V\op W)\,\longrightarrow\,\bb{GL}(V\op W)$$ the conjugation by $\widehat T$.
Then $T$ is $G$--equivariant if and only if $C_T\circ\begin{pmatrix}f& 0\\ 0&g\end{pmatrix}
\,=\,\begin{pmatrix}f&0\\ 0&g\end{pmatrix}$. Similarly let us write $c_T\,:\,\g{gl}(V\op W)
\,\longrightarrow\,\g {gl}(V\op W)$ to denote conjugation by $\widehat T$. Then, 
for given representations $\rho\,:\,\g L\,\longrightarrow\,\g{gl}(V)$ and $\si\,:\,\g L\,\longrightarrow\,
\g{gl}(W)$, the arrow $T$ is a morphism of $\g L$-modules if and only if $c_T\begin{pmatrix}\rho& 0\\ 0&\si\end{pmatrix}
\,=\,\begin{pmatrix}\rho& 0\\ 0&\si\end{pmatrix}$. Employing equation \eqref{10.06.2021--1}, we verify readily that
\[
\xymatrix{
\hh{}{\bb G(\g L)}{\bb{GL}(V\op W)}\ar[rr]^{C_T\circ(-)}\ar[d]_\ps&& \hh{}{\bb G(\g L)}{\bb{GL}(V\op W)}\ar[d]^\ps
\\
 \hh{}{ \g L }{\g {gl}(V\op W)}\ar[rr]_{c_T\circ(-)}&& \hh{}{ \g L }{\g{gl}(V\op W)}  }
\]
commutes. We then see that $\ps$ becomes a functor, which is $K$-linear, exact and fully-faithful. 

Let us now deal with the tensor product. Given representations $f\,:\,\bb{G}(\g L)\,\longrightarrow\, \bb{GL}(V)$ and
$g\,:\,\bb{G}(\g L)\,\longrightarrow\,\bb{GL}(W)$, let us write $$t:\bb{G}(\g L)
\,\longrightarrow\,\bb{GL}(V\ot W)$$ for the tensor product 
representation. We then obtain on $V\ot W$  the structure of an $\g L$-module via $\ps_{t}$ and it is to 
be shown that this is precisely the $\g L$-module structure coming from the tensor product of $\g L$-modules. In 
other words, we need to show that for any $x\,\in\, \g L$, the equation 
$\ps_f(x)\ot\id_W+\id_V\ot\ps_g(x)\,=\,\ps_{t}(x)$ holds. We make use of eq. \eqref{10.06.2021--1} again. 
Let $v\,\in\, V$ and $w\,\in\, W$ be such that $c_f(v)\,=\,\sum_iv_i\ot f_i$ and $c_g(w)\,=\,\sum_jw_j\ot g_j$. Then 
$c_{t}(v\ot w)\,=\,\sum_{i,j}v_i\ot w_j\ot f_ig_j$ and hence $$\ps_{t}(x)(v\ot w) \,=\, 
\sum_{i,j}v_i\ot w_j\po(f_i(x)\ep(g_j)+\ep(f_i)g_j(x)),$$ where $\ep\,:\,(\bb U\g L)^\circ
\,\longrightarrow\, K$ is the co-unit defined 
by evaluating at $1\,\in\,\bb U\g L$, and we have used that $\De x\,=\,x\ot1+1\ot x$ (which is true, by definition of the coproduct, for all elements of $\g L$ inside $\bb U\g L$). Now, $\sum_iv_i \ep(f_i)\,=\,v$
and $\sum_jw_j \ep(g_j)\,=\,w$. Hence, $\ps_{t}(x)(v\ot w)\,
=\,\sum_if_i(x)v_i\ot w+\sum_jv\ot g_j(x)w_j$, as we wanted.
\end{proof}

\begin{cor}\label{07.06.2021--1}
Let $G$ be an algebraic group scheme, and let ${\mathbf G}(\g L)\,\longrightarrow\,
G$ be a quotient morphism. Then $G$ is connected. Said differently, ${\mathbf G}(\g L)$ is pro-connected. 
\end{cor}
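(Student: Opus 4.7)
The plan is to exploit the adjunction of Proposition \ref{30.06.2020--1} to show that $\mathbf{G}(\g L)$ admits no nontrivial morphism to a finite étale group scheme, and then reduce connectedness to this statement via the standard $G^\circ$--$\pi_0(G)$ exact sequence.

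First, I would establish the following auxiliary claim: for any finite étale group scheme $F$ over $K$, the only morphism of group schemes $\mathbf{G}(\g L)\to F$ is the trivial one, i.e.\ the composition $\mathbf{G}(\g L)\to\spc K\to F$. Indeed, by Proposition \ref{30.06.2020--1}, morphisms $\mathbf{G}(\g L)\to F$ are in bijection with Lie algebra morphisms $\g L\to\mathrm{Lie}\, F$; since $F$ is finite étale and $K$ has characteristic zero, $F$ is zero-dimensional and so $\mathrm{Lie}\, F=0$, leaving only the zero Lie algebra map. Unwinding the adjunction (or using eq.\ \eqref{pra}) one checks that the zero map on the Lie side corresponds precisely to the trivial morphism on the group scheme side.

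Next, I would apply this to the component quotient. Let $q\,:\,\mathbf{G}(\g L)\to G$ be a quotient morphism with $G$ algebraic. The identity component $G^\circ\subset G$ is an open and closed normal subgroup, and the quotient $\pi\,:\,G\to G/G^\circ=:\pi_0(G)$ is a quotient morphism onto a finite étale group scheme. Composing, $\pi\circ q\,:\,\mathbf{G}(\g L)\to\pi_0(G)$ is a composition of faithfully flat affine morphisms, hence again faithfully flat, so it is a quotient morphism. By the auxiliary claim, $\pi\circ q$ factors through $\spc K$. A quotient morphism factoring through a $K$-point of its target forces the target to equal that $K$-point; hence $\pi_0(G)$ is the trivial group scheme, and therefore $G=G^\circ$ is connected.

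The conclusion "said differently, $\mathbf{G}(\g L)$ is pro-connected" is then immediate: $\mathbf{G}(\g L)$ is the inverse limit of its algebraic quotients (since $(\bb U\g L)^\circ$ is the filtered colimit of its finitely generated Hopf subalgebras), and each such quotient has just been shown to be connected. The main (rather slight) obstacle is the book-keeping at step one, namely verifying that the adjunction of Proposition \ref{30.06.2020--1} identifies the zero Lie algebra map with the trivial group homomorphism; this is a short direct computation from the explicit formula $\ph_\rho(a)(u)=\ep(\rho(u)(a))$ in \eqref{pra}, since $\rho=0$ on $\g L$ forces $\rho(u)=\ep_{\bb U\g L}(u)\cdot\id$ on all of $\bb U\g L$.
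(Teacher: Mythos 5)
Your proof is correct and follows essentially the same route as the paper: both arguments use the adjunction of Proposition \ref{30.06.2020--1} together with the vanishing of ${\rm Lie}\,\pi_0(G)$ in characteristic zero to conclude that the only morphism $\mathbf{G}(\g L)\to\pi_0(G)$ is trivial, and then use surjectivity of the composite onto $\pi_0(G)$ to force $\pi_0(G)$ to be trivial. The extra book-keeping you supply (identifying the zero Lie map with the trivial group-scheme morphism, and the reduction of pro-connectedness to the algebraic quotients) is sound and merely makes explicit what the paper leaves implicit.
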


\begin{proof}For the finite \'etale group scheme $\pi_0(G)$ \cite[Section 6.7]{waterhouse79}, the set 
\[
\hh{\bb{LA}}{\g L}{\, {\rm Lie}\,(\pi_0G)} = \hh{\bb{LA}}{\g L}{\,0 }
\]
is a singleton and hence $\hh{\bb{GS}}{{\mathbf G}(\g L)}{\,\,\pi_0(G)}$ is a singleton because of Proposition \ref{30.06.2020--1}. It then follows that
$\pi_0(G)$ is trivial and $G$ is connected \cite[Theorem of 6.6]{waterhouse79}. 
\end{proof}

In what follows, we denote by
\begin{equation}\label{echi}
\chi\,\,:\,\, \g L\,\aro\,{\rm Lie} \,{\mathbf G}(\g L)
\end{equation}
the morphism of Lie algebras corresponding to the identity of $\hh{\bb{GS}}{{\mathbf G}(\g L)}{\,\,{\mathbf G}(\g L)}$ under the bijection in Proposition \ref{30.06.2020--1}. This is, of course, the unit of the adjunction \cite[IV.1]{maclane70}. Let us profit to note that, as explained in \cite[IV.1, eq. (5)]{maclane70}, for each $f\in \hh{\bb{GS}}{\bb G(\g L)}{G}$, the equation 
\begin{equation}\label{01.06.2021--2}
\ps_f\,=\,({\rm Lie}\,f)\circ\chi.
\end{equation}
is valid.

One fundamental property of $\chi$ needs to be expressed in terms of ``algebraic density'' \cite[p.~175]{hochschild74}.

\begin{dfn}Let $G$ be a group scheme. A morphism $\rho\,:\,\g L\,\longrightarrow\,
{\rm Lie}\,G$ is {\it algebraically dense} if the only closed subgroup scheme $H\,\subset\, G$
such that $\rho(\g L)\,\subset\,{\rm Lie}\,H$ is $G$ itself. If $\rho$ happens to be an injection, we shall simply say that $\g L$ is algebraically dense.
\end{dfn}

\begin{prp}\label{01.06.2021--3}
The morphism $\chi\,:\,\g L\,\longrightarrow\, {\rm Lie}\,{\mathbf G}(\g L)$ in \eqref{echi} is algebraically dense. 
\end{prp}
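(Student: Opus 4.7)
The plan is to exploit the adjunction established in Proposition \ref{30.06.2020--1}, and in particular the identity \eqref{01.06.2021--2}, which together express the fact that $\chi$ is in a strong sense the \emph{universal} Lie algebra morphism out of $\g L$ into the Lie algebra of any group scheme. Write $G=\bb G(\g L)$, and let $H\subset G$ be a closed subgroup scheme with inclusion $\iota:H\hookrightarrow G$ satisfying $\chi(\g L)\subset {\rm Lie}\,H$. The goal is to deduce $H=G$.

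First I would factor $\chi$ through ${\rm Lie}\,H$: since ${\rm Lie}(\iota):{\rm Lie}\,H\hookrightarrow {\rm Lie}\,G$ is injective, the hypothesis furnishes a unique Lie algebra morphism $\chi_0:\g L\to {\rm Lie}\,H$ with ${\rm Lie}(\iota)\circ\chi_0=\chi$. By Proposition \ref{30.06.2020--1} (applied to the group scheme $H$), there exists a unique morphism of group schemes $f:G\to H$ such that $\ps_f=\chi_0$; in view of \eqref{01.06.2021--2}, this is the same as saying ${\rm Lie}(f)\circ\chi=\chi_0$.

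Next, consider the composition $\iota\circ f:G\to G$. Applying \eqref{01.06.2021--2} once more, together with the functoriality of ${\rm Lie}$,
\[
\ps_{\iota\circ f}\,=\,{\rm Lie}(\iota\circ f)\circ\chi\,=\,{\rm Lie}(\iota)\circ\bigl({\rm Lie}(f)\circ\chi\bigr)\,=\,{\rm Lie}(\iota)\circ\chi_0\,=\,\chi\,=\,\ps_{\id_G}.
\]
Because $\ps$ is a bijection, $\iota\circ f=\id_G$. Thus the closed immersion $\iota$ admits a section. But a closed immersion is a monomorphism of schemes, and a split monomorphism that is also a monomorphism is an isomorphism (equivalently: on $R$-points, $\iota(R)$ is injective and surjective for every $K$-algebra $R$). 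Hence $\iota$ is an isomorphism and $H=G$.

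The argument is essentially a two-step diagram chase in the $(\bb G,{\rm Lie})$-adjunction, so I do not foresee any serious obstacle. The only points requiring care are that $\iota$ induces an injection on Lie algebras (standard for closed subgroup schemes over a field) and the closing remark that a split mono which is a closed immersion must be an isomorphism; everything else is a direct invocation of \eqref{01.06.2021--2} and the uniqueness clause of Proposition \ref{30.06.2020--1}.
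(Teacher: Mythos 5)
Your proof is correct, and its first half is exactly the paper's opening move: factor $\chi$ through ${\rm Lie}\,H$, use Proposition \ref{30.06.2020--1} to produce $f\,:\,\bb G(\g L)\,\longrightarrow\, H$ with $\ps_f\,=\,\chi_0$, and then apply \eqref{01.06.2021--2} together with the injectivity of $\ps$ to conclude $\iota\circ f\,=\,\id_{\bb G(\g L)}$. Where you genuinely diverge is in the finish. The paper extracts from $\iota\circ f\,=\,\id$ only the weaker consequence that ${\rm Lie}\,\iota$ is surjective, and must then upgrade this to $H\,=\,\bb G(\g L)$ by writing $\bb G(\g L)$ as a limit of algebraic quotients $G_i$ (Lemma \ref{01.06.2021--4}), checking surjectivity of ${\rm Lie}\,H_i\,\longrightarrow\,{\rm Lie}\,G_i$, and invoking the connectedness of the $G_i$ (Corollary \ref{07.06.2021--1}) together with Demazure--Gabriel; the connectedness input is genuinely needed on that route, since a closed subgroup with full Lie algebra need not be everything (e.g.\ ${\bf SO}_n\,\subset\,{\bf O}_n$). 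You instead observe that $\iota\circ f\,=\,\id$ already exhibits the monomorphism $\iota$ as a retract, hence an isomorphism --- equivalently, $\iota^\#\,:\,\co(\bb G(\g L))\,\longrightarrow\,\co(H)$ is surjective because $\iota$ is a closed immersion and injective because it admits the left inverse $f^\#$. This categorical shortcut bypasses the pro-algebraic limit and the connectedness argument entirely, at the cost of nothing; the paper's longer route does, however, record along the way the surjectivity statements for Lie algebras of the quotients $G_i$, which it reuses in the proof of Theorem \ref{25.06.2020--3}. One terminological slip: the relation $\iota\circ f\,=\,\id$ makes $\iota$ a split \emph{epi}morphism (not a ``split monomorphism''), but your parenthetical justification via bijectivity on $R$-points for every $K$-algebra $R$ is the correct one, and the argument stands.
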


To prove Proposition \ref{01.06.2021--3}, we   require the following fact. (The proofs of the first claims are in \cite[Corollary in 3.3, p.~24]{waterhouse79} and \cite[Theorem of 
14.1, p.~109]{waterhouse79}, while the proof of the final claim can be found in \cite[II.5, Proposition 5.3, p.250]{demazure-gabriel70}.) 

\begin{lem}\label{01.06.2021--4} Let $G$ be a group scheme. Then there exists a projective system of algebraic group schemes
$\{G_i,\,u_{ij}\,:\,G_j\,\longrightarrow\, G_i\}$ where each $u_{ij}$ is faithfully flat and an isomorphism
$G\,\simeq\, \varprojlim_iG_i$. In addition, all arrows ${\rm Lie}\,u_{ij}\,:\,
{\rm Lie}\,G_j\,\longrightarrow\,{\rm Lie}\,G_i$ are surjective.\qed
\end{lem}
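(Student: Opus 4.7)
The plan is to deploy the classical structure theorem stating that every affine group scheme is a cofiltered projective limit of its algebraic quotients. Writing $A := \co(G)$, one uses that $A$ is the directed union $\varinjlim A_i$ of its finitely generated sub-Hopf algebras (the family of such subalgebras is indeed directed, because any finite subset of $A$ is contained in a finitely generated sub-Hopf algebra, cf. Waterhouse \S 3.3). Setting $G_i := \spc A_i$ and letting $u_{ij} : G_j \aro G_i$ be the morphism induced by the inclusion $A_i \subseteq A_j$, each $G_i$ is algebraic because $A_i$ is a finitely generated $K$-algebra, and $G \simeq \varprojlim G_i$ is built-in.

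For the faithful flatness of $u_{ij}$ (and of the structural maps $G \aro G_i$), I would appeal to Takeuchi's theorem (see Waterhouse, Ch. 14) which asserts that a commutative Hopf algebra is faithfully flat over any of its sub-Hopf algebras. Consequently both $A_i \hookrightarrow A_j$ and $A_i \hookrightarrow A$ induce faithfully flat morphisms on spectra.

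To obtain the surjectivity of ${\rm Lie}\,u_{ij}$, the decisive ingredient is Cartier's theorem, which in characteristic zero asserts that every affine algebraic group scheme over $K$ is smooth. Denoting by $K_{ij}$ the schematic kernel of $u_{ij}$, faithful flatness gives a short exact sequence $1 \aro K_{ij} \aro G_j \aro G_i \aro 1$ of algebraic groups, whence $\dim G_j = \dim G_i + \dim K_{ij}$. Smoothness yields $\dim H = \dim_K {\rm Lie}\,H$ for $H \in \{G_i,\, G_j,\, K_{ij}\}$, while ${\rm Lie}\,K_{ij} = \ker\,{\rm Lie}\,u_{ij}$ is formal (kernels are computed on $K[\varepsilon]$-points). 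Combining these three equalities forces ${\rm Lie}\,u_{ij}$ to be surjective by a dimension count.

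The main obstacle is essentially bookkeeping: one must correctly invoke Takeuchi's and Cartier's theorems and confirm that the indexing system is cofiltered. Beyond these two black boxes nothing genuinely subtle is required; one could even bypass the kernel $K_{ij}$ and argue purely with tangent spaces at the identity, using smoothness in characteristic zero to equate dimensions of Lie algebras with dimensions of the respective group schemes.
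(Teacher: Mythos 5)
Your proposal is correct and follows essentially the same route as the paper: the directed union of finitely generated sub-Hopf algebras (Waterhouse \S 3.3) for the projective system, Takeuchi's theorem (Waterhouse \S 14.1) for faithful flatness, and Cartier's smoothness theorem in characteristic zero for the surjectivity of the maps on Lie algebras. The only difference is cosmetic: where the paper simply cites two results from Milne's book for the last step, you unpack them into the standard dimension count with the kernel $K_{ij}$, which is a valid way to fill in that citation.
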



\begin{proof}[Proof of Proposition \ref{01.06.2021--3}]
Let $u\,:\,H\,\longrightarrow\, {\mathbf G}(\g L)$ be a closed immersion and let
$\rho\,:\,\g L\,\longrightarrow\, {\rm Lie}\,H$ be a morphism of
Lie algebras such that 
\[({\rm Lie}\,u)\circ\rho\,=\,\chi.
\] 
Let $f\,:\,{\mathbf G}(\g L)
\,\longrightarrow\, H$ be a morphism of group schemes such that $\rho\,=\,\ps_f$. From eq. \eqref{01.06.2021--2}, we have
\[
\rho\,=\,({\rm Lie}\,f)\circ\chi.
\]
Hence, $\chi\,=\,({\rm Lie}(u\circ f))\circ\chi$, which proves that $u\circ f\,=\,\id_{\bb G(\g L)}$ (see eq. \eqref{01.06.2021--2}). In particular, ${\rm Lie}\,u$ is 
surjective. 

Let us now write $$\bb G(\g L)\,=\,\varprojlim_iG_i$$ as in Lemma \ref{01.06.2021--4}. Define $H_i$ as being the image
of $H$ in $G_i$; a moment's thought shows that $$H\,=\,\varprojlim_iH_i ,$$ and that the the transition arrows of
the projective system $\{H_i\}$ are also faithfully flat. This being so, the morphisms between Lie algebras in the
projective system $\{H_i\}$ are all surjective \cite[II.5, Proposition 5.3, p.250]{demazure-gabriel70}.
Consequently, the obvious morphisms ${\rm Lie}\,\bb G(\g L)\,\longrightarrow\, {\rm Lie}\,G_i$ and ${\rm Lie}\,H
\,\longrightarrow\,{\rm Lie}\,H_i$ are always surjective. 
Hence, the natural morphisms  ${\rm Lie}\,H_i\,\longrightarrow\,{\rm Lie}\,G_i$ are always surjective. 

Using \cite[Proposition II.6.2.1, p.~259]{demazure-gabriel70} and the fact that each $G_i$ is connected, we 
conclude that $H_i\,=\,G_i$, and $H\,=\,{\mathbf G}(\g L)$. This proves Proposition \ref{01.06.2021--3}.
\end{proof}

Let $G$ be an algebraic group scheme with Lie algebra $\g g$. Recall that a {\it Lie subalgebra of $\g g$ is algebraic }
if it is the Lie subalgebra of a closed subgroup scheme of $G$ \cite[Definition II.6.2.4]{demazure-gabriel70}. As 
argued in \cite[II.6.2, p.~262]{demazure-gabriel70}, given an arbitrary Lie subalgebra $\g h\,\subset\,\g g$, there 
exists a smallest algebraic Lie subalgebra of $\g g$ containing $\g h$: it is the (algebraic) {\it envelope} of $\g 
h$ inside $\g g$. Allied with \cite[II.6.2.1a, p.~259]{demazure-gabriel70}, it then follows that there exists a 
{\it smallest closed and connected subgroup scheme} of $G$ whose Lie algebra contains $\g h$. This group carries no 
name in \cite{demazure-gabriel70}, so we shall allow ourselves to put forward:

\begin{dfn}\label{06.08.2020--4}
Let $G$ be an algebraic group scheme and $\g h\,\subset\, \mm{Lie}\,G$ a Lie subalgebra. The {\it group-envelope} of $\g h$ is the smallest
closed subgroup scheme of $G$ whose Lie algebra contains $\g h$. We also define the {\it group-envelope} of a subspace $V\subset {\rm Lie}\,G$ as being the group-envelope of the Lie algebra generated by $V$ in ${\rm Lie}\,G$. 
\end{dfn}
 
\begin{thm}[{\cite[Theorem 1, \S~3]{hochschild59}}]\label{25.06.2020--3}
Let $f\,:\,{\mathbf G}(\g L)\,\longrightarrow\, \bb{GL}(E)$ be the representation
associated to the $\g L$--module $\rho\,:\,\g L\,\longrightarrow\,\g{gl}(E)$, that is, $\ps_f\,=\,\rho$. Then
the image $I\,=\, {\rm image}(f)$ of ${\mathbf G}(\g L)$ in $\bb{GL}(E)$ is the group-envelope
of $\rho(\g L)\,\subset\, \g{gl}(E)$.
\end{thm}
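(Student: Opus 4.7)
The plan is to prove the two inclusions $J \subset I$ and $I \subset J$, where $J$ denotes the group-envelope of $\rho(\g L)$ inside $\g{gl}(E)$.

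For the inclusion $J \subset I$, I would factor $f$ through its image as $\mathbf{G}(\g L) \stackrel{q}{\twoheadrightarrow} I \stackrel{\iota}{\hookrightarrow} \bb{GL}(E)$. Since $\iota$ is a closed immersion, $\mathrm{Lie}(\iota)$ is injective with image $\mathrm{Lie}\,I \subset \g{gl}(E)$, so $\mathrm{Lie}(f)=\mathrm{Lie}(\iota)\circ\mathrm{Lie}(q)$ takes values in $\mathrm{Lie}\,I$. Combined with the hypothesis $\ps_f = \rho$ and equation \eqref{01.06.2021--2}, this yields $\rho(\g L) = (\mathrm{Lie}\,f)(\chi(\g L)) \subset \mathrm{Lie}\,I$. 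The minimality clause in Definition \ref{06.08.2020--4} then immediately gives $J \subset I$.

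For the reverse inclusion, I would introduce the scheme-theoretic preimage $H := \mathbf{G}(\g L) \times_{\bb{GL}(E)} J$, a closed subgroup scheme of $\mathbf{G}(\g L)$. By left exactness of the Lie functor (e.g.\ \cite[II.4.1]{demazure-gabriel70}), one has $\mathrm{Lie}\,H = (\mathrm{Lie}\,f)^{-1}(\mathrm{Lie}\,J)$. Since $\rho(\g L) \subset \mathrm{Lie}\,J$ by definition of $J$, and $\rho = (\mathrm{Lie}\,f) \circ \chi$ by \eqref{01.06.2021--2}, it follows that $\chi(\g L) \subset \mathrm{Lie}\,H$. Applying Proposition \ref{01.06.2021--3}, which asserts that $\chi$ is algebraically dense in $\mathbf{G}(\g L)$, I conclude that $H = \mathbf{G}(\g L)$; equivalently, $f$ factors through $J$, whence $I \subset J$.

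The only real obstacle is the bookkeeping lemma identifying $\mathrm{Lie}(f^{-1}(J))$ with $(\mathrm{Lie}\,f)^{-1}(\mathrm{Lie}\,J)$, which is a standard consequence of the description $\mathrm{Lie}\,G = \ker(G(K[\ep]/\ep^{2}) \to G(K))$ applied to the fibre product. Once this is granted, the two inclusions reduce entirely to the universal identity $\rho = (\mathrm{Lie}\,f)\circ\chi$ coming from the adjunction of Proposition \ref{30.06.2020--1} and to the algebraic density of $\chi$ in Proposition \ref{01.06.2021--3}.
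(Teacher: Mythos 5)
Your proposal is correct and follows essentially the same route as the paper: both rest on the identity $\rho=({\rm Lie}\,f)\circ\chi$, the identification ${\rm Lie}\,f^{-1}(H)=({\rm Lie}\,f)^{-1}({\rm Lie}\,H)$, and the algebraic density of $\chi$ (Proposition \ref{01.06.2021--3}), merely packaged as two inclusions rather than as minimality of $I$ among all admissible $H$. The one small divergence is that you justify the key identification by left exactness of ${\rm Lie}$ on the fibre product via dual numbers, which is in fact a bit more direct than the paper's reduction to the algebraic case through \cite[Proposition II.2.6.1]{demazure-gabriel70} and Lemma \ref{01.06.2021--4}.
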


\begin{proof}
Consider a factorization $\rho\,:\,\g L\,\longrightarrow\, {\rm Lie}\, H$, where $H\,\subset\,\bb{GL}(E)$ is closed.
Because $\rho\,=\,({\rm Lie}\, f)\circ \chi$ (see eq. \eqref{01.06.2021--2}), it follows that $\chi(\g L)
\,\subset \,({\rm Lie}\,f)^{-1}({\rm Lie}\,H)$. We now observe that  the natural inclusion    
\[{\rm Lie}\,(f^{-1}(H))\subset ({\rm Lie}\,f)^{-1}({\rm Lie}\,H)\] is an {\it equality}, see Lemma \ref{31.12.2022--1} below.  (We are   unable to find a reference for this simple fact.) 
Hence, $f^{-1}(H)\,=\,{\mathbf G}(\g L)$ because $\chi\,:\,\g L
\,\longrightarrow\,{\rm Lie}\,{\mathbf G}(\g L)$ is algebraically dense. This implies that
$I\,\subset\, H$. Because $\rho(\g L)\,=\,{\rm Lie}(f)\circ\chi(\g L)$, we deduce that ${\rm Lie}\,I
\,\supset\, \rho(\g L)$, so that $I$ is the group-envelope. 
\end{proof}

\begin{lem}\label{31.12.2022--1}Let $f:G'\,\aro\, G$ be a morphism of group schemes. Let $H\subset G$ be a closed subgroup. Denote by $H'$ its inverse image in $G'$. Then, ${\rm Lie}\,H'=\mm{Lie}(f)^{-1}(\mm{Lie}\,H)$.\end{lem}

\begin{proof} It is only needed to show the inclusion $\mm{Lie}\,H'\supset(\mm{Lie}\,f)^{-1}(\mm{Lie}\,H)$. We consider elements of Lie algebras as $\ep$-derivations, cf. the proof of Theorem 12.2 in \cite{waterhouse79}. 
Let $\partial:\co(G')\,\aro\,K$ be an $\ep$-derivation whose image in ${\rm Lie}\,G$ is induced by an $\ep$-derivation $\co(H)\,\aro\,K$. Now, if $I\subset\co(G)$ is the ideal of $H$, we conclude that   
\[
\co(G)\,\aro\,\co(G')\stackrel \partial\longrightarrow K\]
annihilates $I$, and hence that $\partial$ annihilates       $I\co(G)$. But this is  the ideal of   $f^{-1}(H)$ and hence $\partial:\co(G')\aro K$ comes from an $\ep$-derivation $\co(H')\aro K$. 
\end{proof}
\section{The differential Galois group}

In this section, $X$ is assumed to be a projective, connected and smooth $K$-scheme and $x_0$ a $K$-point of $X$. We 
recall that $\Te(X,\,x_0)$ is the group scheme constructed in eq. \eqref {06.08.2020--2}.

Using the tensor equivalences 
\[\bb{Rep}\,\bb G(\g L_X)\,\,\arou\sim\,\,
\modules{ \g L_X}\,\,\arou\sim\, \,\modules{\g A_X}\arou{\cv}\mictr\,\,\arou{\bullet|_{x_0}}\,\, \bb{Rep}\,\Te(X,x_0)
\]
obtained by eq. \eqref{06.08.2020--1}, Theorem \ref{06.08.2020--3} and Corollary \ref{07.06.2021--2}, we derive an isomorphism 
\[\ga:
\Te(X,\,x_0)\,\,\arou\sim \,\,\bb G(\g L_X) 
\]
such that the corresponding functor $\ga^\#\,:\,\bb{Rep}\,\bb G(\g L_X)\,\longrightarrow\,
\bb{Rep}\,\Te(X,\,x_0)$ is naturally isomorphic to the above composition.

Let $(E,\,A)\,\in\,\modules{\g A_X}$ be given. With an abuse of notation, we shall let $A$ denote the linear map $H^0(\Om_X^1)^*
\,\longrightarrow\, \mm{End}(E)$, 
the morphism of associative algebras $\g A_X\,\longrightarrow\, \mm{End}(E)$ or the morphism of Lie algebras
$\g L_X \,\longrightarrow\, \mm{End}(E)$.

\begin{thm} \label{25.06.2020--1}The differential Galois group of $\cv(E,\,A)\,=\,(\co_X\ot E,\,d_A)$ is
the group-envelope of $A(H^0(X,\, \Om_X^1)^*)$.

Said otherwise, given a trivial vector bundle $\ce$ or rank $r$ with global basis $\{e_i\}_{i=1}^r$, an integrable connection
$$\na\,:\,\ce\,\longrightarrow\,\ce\ot\Om_{X}^1$$ and a basis $\{\te_j\}_{j=1}^g$ of $H^0(X,\,\Om_X^1)$, define matrices
$A_k\,=\,(a^{(k)}_{ij})_{1\le i,j\le r}\,\in\, {\rm M}_r(K)$ by
\[
\na e_j\,=\,\,\,\sum_{k=1}^g \sum_{i=1}^r a^{(k)}_{ij}\po e_i\ot \te_k.
\]
Then, the differential Galois group of $\ce$ at the point $x_0$ is isomorphic to the group-envelope
in $\bb{GL}_r$ of the Lie algebra generated by $\{A_k\}_{k=1}^g$.
\end{thm}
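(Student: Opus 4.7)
The plan is to reduce the statement to Hochschild's theorem (Theorem \ref{25.06.2020--3}) via the chain of tensor equivalences already assembled. First, because $\cv(E,A)\in\mictr$, its fibre at $x_0$ is canonically identified with $E$ (Theorem \ref{06.08.2020--3}(ii)), and the corresponding representation $\rho_{\cv(E,A)}\,:\,\Pi(X,x_0)\aro \bb{GL}(E)$ factors as the composition of $\bb q_X\,:\,\Pi(X,x_0)\aro \Te(X,x_0)$ with a representation $\rho'\,:\,\Te(X,x_0)\aro \bb{GL}(E)$. By Proposition \ref{30.06.2020--2}, $\bb q_X$ is a quotient morphism, hence faithfully flat, and therefore $\mathrm{image}(\rho_{\cv(E,A)})=\mathrm{image}(\rho')$. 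This image is, by Definition \ref{25.05.2021--1}, the differential Galois group of $\cv(E,A)$ at $x_0$.

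Next I would transport $\rho'$ along the isomorphism $\ga\,:\,\Te(X,x_0)\arou\sim \bb G(\g L_X)$ to obtain a representation $f\,:\,\bb G(\g L_X)\aro \bb{GL}(E)$ whose image coincides with that of $\rho'$. The crucial verification is that the Lie algebra morphism $\ps_f\,:\,\g L_X \aro \g{gl}(E)$ attached to $f$ via Proposition \ref{30.06.2020--1} is exactly the map $A\,:\,\g L_X\aro \g{gl}(E)$ coming from the original representation of $\g A_X$. This is a matter of chasing the chain
\[
\bb{Rep}\,\bb G(\g L_X)\arou\sim \modules{\g L_X}\arou\sim \modules{\g A_X}\arou\cv \mictr \arou{\bullet|_{x_0}}\bb{Rep}\,\Te(X,x_0)
\]
together with the description in equation \eqref{10.06.2021--1} of $\ps_f$ as the co-action of $f$ paired against $x\in\g L_X$; each link in the chain has been shown to be a tensor equivalence whose action on objects is the forgetful one on underlying vector spaces, so $\ps_f$ must agree with $A$ on elements of $\g L_X$.

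Once this identification is in place, Theorem \ref{25.06.2020--3} gives immediately that $\mathrm{image}(f)$ is the group-envelope of $A(\g L_X)\subset \g{gl}(E)$. To finish, I invoke Lemma \ref{01.07.2020--1}: the Lie algebra $\g L_X$ is generated by the image of $H^0(X,\Om_X^1)^*$, so $A(\g L_X)$ is exactly the Lie subalgebra of $\g{gl}(E)$ generated by $A(H^0(X,\Om_X^1)^*)$. By the second part of Definition \ref{06.08.2020--4}, the group-envelope of the subspace $A(H^0(X,\Om_X^1)^*)$ is by convention the group-envelope of this generated Lie subalgebra. Thus the differential Galois group of $\cv(E,A)$ equals the group-envelope of $A(H^0(X,\Om_X^1)^*)$, which in the matrix formulation is the group-envelope in $\bb{GL}_r(K)$ of the Lie algebra generated by the matrices $A_1,\ldots,A_g$.

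The main obstacle is the bookkeeping in the middle paragraph: checking that under the composed equivalence $\bb{Rep}\,\bb G(\g L_X)\arou\sim\bb{Rep}\,\Te(X,x_0)$ transferred along $\ga$, the Lie algebra morphism reconstructed from $f$ via \eqref{10.06.2021--1} recovers the original datum $A\in \mm{End}(E)\ot H^0(X,\Om_X^1)$. All other steps are either references to theorems established earlier in the paper or a direct appeal to the definition of the group-envelope.
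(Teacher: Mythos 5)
Your proposal is correct and follows essentially the same route as the paper's proof: identify the differential Galois group with the image of $\Te(X,x_0)\simeq\bb G(\g L_X)$ via Proposition \ref{30.06.2020--2}, apply Theorem \ref{25.06.2020--3} to that image, and use Lemma \ref{01.07.2020--1} to replace $A(\g L_X)$ by the Lie algebra generated by $A(H^0(X,\Om_X^1)^*)$. The ``bookkeeping'' step you flag (that $\ps_f=A$ under the chain of equivalences) is exactly the point the paper leaves implicit, relying on Theorem \ref{06.08.2020--3}(ii) and Corollary \ref{07.06.2021--2}, so your extra care there is welcome but not a divergence.
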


\begin{proof}
We note that the Lie subalgebra of $\mm{End}(E)$ generated by $A(H^0(\Om_X^1)^*)$ is the image of $A(\g L_X)$; 
indeed, as a Lie algebra, $\g L_X$ is generated by $H^0(\Om_X^1)^*$ (see Lemma \ref{01.07.2020--1}). Now we apply 
Theorem \ref{25.06.2020--3} to conclude that the image of ${\mathbf G}(\g L_X)$ in $\bb{GL}(E)$ is the 
group-envelope of the Lie algebra generated by $A(H^0(\Om_X^1)^*)$. Because of Proposition \ref{30.06.2020--2}, the 
image of $\Te(X,\,x_0)\,\simeq\, {\mathbf G}( \g L_X)$ is the image of $\Pi(X,\,x_0)$, which is the differential 
Galois group.
\end{proof}

\begin{rmk}
In ``birational'' differential Galois theory, one can find a result reminiscent of Theorem
\ref{25.06.2020--1}; see \cite[p.~25, Remarks 1.33]{van_der_put-singer03}.
\end{rmk}

\begin{rmk}One could hope for a straightforward way to compute differential Galois groups, as Theorem \ref{25.06.2020--1}, in the case where $X$ fails to be projective  and $(\ce,\na)$ is taken to be a {\it regular-singular connection}. But this is   certainly false: take $X\,=\,\spc K[x,\,x^{-1}]$
and define $(\co_{X}e,\,\na)$ by $\displaystyle\na e\,=\,ke\ot\fr{dx}{x}$ for any given $k\,\in\,\ZZ\smallsetminus\{0\}$. In this case, the differential Galois group is trivial (since $\na(x^{-k}e)=0$), while the Lie algebra generated by $k\in\ K$ is not.
\end{rmk}

Fixing generators of the Lie algebra of a subgroup scheme of some general linear group allows us to construct connections with a 
prescribed differential Galois group.
To state our results, we need the notion of semi-simple and reductive group schemes over a general field.   Let  $\ov K$ be an algebraic closure of  $K$. An algebraic group scheme  $G$ (over $K$) is {\it semi-simple}, respectively {\it reductive}, if and only if   $G\ot\ov K$ is a semi-simple, respectively  reductive, group scheme over $\ov K$ \cite[6.44, 6.46]{milne17}. Since in the case of reductive group schemes our arguments require a bit more group theory, we treat the semi-simple and reductive cases separately.

\begin{cor}\label{14.06.2021--1}
Let $X$ be a projective curve (smooth and integral, by assumption) over $K$ of genus $g$ and carrying a point $x_0\,\in\, X(K)$.
Let $G$ be a connected algebraic group scheme with Lie algebra $\g g$. 
\begin{enumerate}[(1)]\item Let 
\[
\mu=\min\left\{\dim V\,:\,\begin{array}{c}\text{$V$ is a subspace of $\g g$ which }\\ \text{  generates $\g g$ as a Lie algebra}
\end{array}\right\}.
\]Then, there exists a trivial vector bundle with a connection having differential Galois group $G$ if   $\mu\le g$. 
\item Suppose that $g\ge2$ and that   $G$ is  semi-simple. Then there exists a trivial vector bundle with a connection having differential Galois group $G$.
\item[(2bis)] In the setting of the previous item, if $G$ is, in addition, split, then the   connection  can be written down explicitly. (For the definition of ``split'', see \cite[Definition 19.22, p.402]{milne17}.)
\end{enumerate}
\end{cor}

\begin{proof}
(1) Let $V\subset\g g$ be a subspace of $\g g$ of dimension  $\mu$ which generates $\g g$ as a Lie algebra. 
  Let $G\,\longrightarrow\, \bb{GL}(E)$ be a closed 
immersion and regard $V$ as a subspace of  $\mm{End}(E)$. We then pick any $K$-linear map $A\,:\,H^0( \Om_X^1)^*\,\longrightarrow\, \mm{End}(E)$ such that ${\rm Im}(A)\,=\,V$. This map then becomes a morphism of $K$-algebras $A\,:\,\bb TH^0( \Om_X^1)^*\,\longrightarrow\, \mm{End}(E)$. 
Note that $G$ is   the 
group-envelope of $V$ and by Theorem \ref{25.06.2020--1}, the differential Galois group of $\cv(E,A)$ is isomorphic to $G$.

(2) We begin by recalling that a Lie algebra   over $K$ is semi-simple if and only its base change to   $\ov K$ is likewise   \cite[I.6.10]{bourbaki_algebres_lie}.  Since $G\ot \ov K$ is semi-simple if and only if ${\rm Lie}(G\ot\ov K)\simeq ({\rm Lie}\,G)\ot\ov K$ is semi-simple (see either \cite[Corollary II.6.2.2]{demazure-gabriel70} or \cite[Proposition 27.2.2]{tauvel-yu05}), we can assure that  $\g g\,:=\,{\rm Lie}(G)$ is semi-simple. According to Kuranishi's theorem (see 
\cite[Theorem 1]{kuranishi49} or \cite[VIII.2, p.~221, Exercise 8]{bourbaki_algebres_lie}), there exists a two 
dimensional vector space $V\,\subset\,\g g$ generating $\g g$ as a Lie algebra so the previous item can be applied.

(2bis) Let $T$ be a  maximal torus of $G$ which is split. Let us write   $\g t$ for $\mm{Lie}\,T$. Then, $\g g=\g t\op \bigoplus_{\al\in R}\g g_\al$, where $\g g_\al$ is the eigenspace associated to a non-trivial character $\al:T\longrightarrow\bb G_{m,K}$ \cite[21.a]{milne17}. For convenience, we shall also denote by $\al$ the differential $\g t\aro K$ obtained from $\al$. 
From the direct sum decomposition above, we see that   $\g t$ is a Cartan subalgebra of $\g g$. 
 Let $\eta_\al\in\g g_\al\setminus\{0\}$ and let 
\[
\xi\in \g t \setminus\bigcup_{\al\in R}\mm{Ker}(\al)\cup\bigcup_{\substack{\al,\be\in R\\\al\not=\be}}\mm{Ker}(\al-\be).
\]Then \[\xi\quad\text{and}\quad\eta:=\sum_{\al}\eta_\al\] are generators of $\g g$   \cite[VIII.2, p.~221, Exercise 8]{bourbaki_algebres_lie}. 
We can now proceed as in (1): Let $G\subset\bb{GL}(E)$ and let $\xi,\eta\in\g{g}$ be   interpreted as endomorphisms of $E$. Then, if $\ph,\ps\in H^0(\Om_X^1)$ are linearly independent,  define a connection on $\co\ot E$ by 
\[
\na (1\ot e)= (1\ot\xi(e))\ot \ph +(1\ot\eta(e))\ot\ps.\]
Its differential Galois group is isomorphic to $G$.
\end{proof}

Corollary \ref{14.06.2021--1}-(2) can be modified to throw light on the case of reductive groups. Some preparatory material is necessary.

Let $G$ be a reductive group scheme   with Lie algebra $\g g$. Then $G\ot \ov K$ is a reductive $\ov K$-group scheme and hence $\g g\ot\ov K=\mm{Lie}(G\ot\ov K)$ is  a reductive $\ov K$-Lie algebra \cite[Proposition 27.2.2]{tauvel-yu05}. By \cite[I.6.10]{bourbaki_algebres_lie}
$\g g$ is reductive. Hence, $\g g=\g s\op\g z$, where $\g s$ is semi-simple and $\g z$ is the center of $\g g$ \cite[I.6.4, Proposition 5]{bourbaki_algebres_lie}. As is well-known, if $Z$ stands for the {\it neutral component of the center} of $G$, then $\mm{Lie}\,Z=\g z$    \cite[II.6, Proposition 2.1, p.259]{demazure-gabriel70}. Moreover,  $Z$ is a geometrically connected group scheme of multiplicative type  (see Proposition 1.34 and Corollary 17.62 in \cite {milne17}), and hence a {\it torus}.

\begin{lem}Suppose that $[K:\mathbf Q]=\infty$. Then, there exists a subset  $\{\xi,\eta,\zeta\}\subset\g g$ with the following property. The Lie algebra    generated by $\{\xi,\eta,\zeta\}$ is algebraically dense in $\g g$. Said differently, any closed subgroup scheme $H\subset G$ such that $\{\xi,\eta,\zeta\}\subset\mm{Lie}\,H$ must actually coincide with $G$. 
\end{lem}
\begin{proof}Let $\xi,\eta\in\g s$ generate $\g s$ as a Lie algebra. 
We now show that there exists $\zeta\in\g z$ such that $K\zeta$ is algebraically dense in $\g z$. 
Since $Z$ is a torus,  
the $\ov K$-group scheme  $Z\ot \ov K$ is isomorphic to   $\mathbf G_{m,\ov K}^r$. From \cite[24.6.3]{tauvel-yu05} and the hypothesis that $[K:\mathbf Q]=\infty$, there exists $\zeta\in\g z$ such that $\ov K\po(\zeta\ot1)$  is algebraically dense in $\mm{Lie}(Z\ot \ov K)$: it is enough to pick $\ze=(\ze_1,\ldots,\ze_r)\in K^r$ with $\{\ze_i\}$ linearly independent over $\mathbf Q$.  It is then a simple matter to see that $K\zeta\subset\g z$ is algebraically dense.

To end, let $H\subset G$ be a closed subgroup scheme whose Lie algebra $\g h$  contains $\{\xi,\eta,\zeta\}$. Then $\ze\in\g h\cap\g z= \mm{Lie}(H\cap Z)$ \cite[10.14]{milne17} and hence $\g z\subset \g h$, which shows, using the equality $\g g=\g s+\g z$, that $\g h=\g g$ and we conclude by \cite[II.6, Proposition 2.1]{demazure-gabriel70}. 
\end{proof}

The same method used to establish Corollary \ref{14.06.2021--1} now gives the following. 
\begin{cor}\label{04.01.2023--1}Let $X$,   $x_0$ and $g$ be as in Corollary \ref{14.06.2021--1}. Let $G$ be a reductive group scheme. If  $[K:\mathbf Q]=\infty$ and $g\ge3$, then $G$ is the differential Galois group of a connection on a trivial vector bundle. In addition, if $G$ is split, the construction can be made explicit. \qed
\end{cor}

Let us illustrate how explicit the constructions can be made with an example. All depends on the construction of generating elements in a Lie algebra. 

\begin{ex}Let $K=\QQ$ and  $X$ be an arbitrary smooth and projective curve carrying a $K$-rational point  and having genus at least 2.  We give ourselves   non-proportional global differential forms $\ph$ and $\ps$. We wish to construct an explicit expression of a connection on a free vector bundle on $X$ such that the associated differential Galois group is an ``exceptional'' group scheme. To explain what exceptional means requires some preliminary material from \cite{bourbaki_algebres_lie} and \cite{milne17}.

Recall from the ``Isogeny Theorem''   that there exists a semi-simple and split  group scheme $G$ whose root system is $G_2$ and, in addition, such group is unique up to isomorphism. This is clearly proved as  \cite[Theorem 23.25, pp 492-3]{milne17}, albeit in terms of root data. The (well-known) link between root data  and  root systems  is explained by the concept of semi-simple root data \cite[Definition C.34, p.615]{milne17} and ``diagrams'' 
\cite[Definition C.27, p.613]{milne17}. 
See also \cite[Theorem 23.58, p.501]{milne17} for a concise statement. 

We set out to construct a connection on $X$ whose differential Galois group is isomorphic to $G$. To employ our method, we need to realise $\mm{Lie}\,G$ explicitly as an algebra of matrices and then spot generators; for the first task  we follow  \cite{hesselink19} because of the explicitness of its  constructions over $\mathbf Q$. 
Let $V$ be an {\it eight}-dimensional vector space with basis $\{b_i\}_{i=0}^7$. For $0\le i,j\le 7$, define     $e_{ij}\in\mm{End}(V)$  by 
\[
b_k\longmapsto\left\{\begin{array}{ll}
0,&\text{if $k\not=j$,}
\\
b_i,&\text{if $k=j$.}
\end{array}\right. 
\]
We then define  ``upper-triangular maps'' 
\[\begin{array}{lllllllll}
x_0=  e_{01}+e_{23}-e_{24}+e_{ 35}-e_{45}-e_{67},&&& x_1= e_{12}-e_{56},&&&
\\ x_2=-e_{02}+e_{13}-e_{14}+e_{36}-e_{46}+e_{57},
\\
x_3=e_{03}-e_{04}-e_{15}+e_{26}+e_{37}-e_{47},&&&x_4=-e_{05}+e_{27},&&&x_5=-e_{06}+e_{17},
\end{array}\]
 ``diagonal maps'' 
\[
\begin{array}{llll}x_6=e_{00}+e_{11}-e_{66}-e_{77},&&&x_7= e_{00}+e_{22}-e_{55}-e_{77},\end{array}
\]
and ``lower-triangular maps''
\[
\begin{array}{llllllll}
x_8=e_{60}-e_{71},&&&x_9=e_{50}-e_{72},&&&\\
x_{10} =-e_{30}+e_{40}+e_{51}-e_{62}-e_{73}+e_{74},
\\
x_{11}=e_{20}-e_{31}+e_{41}-e_{63}+e_{64}-e_{75},&&&x_{12}=-e_{21}+e_{65},&&&
\\x_{13}=-e_{10}-e_{32}+e_{42}-e_{53}+e_{54}+e_{76}. 
\end{array}
\]
(See p. 631  and section 3.3 in  \cite{hesselink19}, but beware that he uses a capital $X$ to denote endomorphisms.) By computer-algebra, one can verify that the span of $\{x_i\}_{i=0}^{13}$
is a Lie algebra $\g G$ of dimension 14 and   that  $\g T:=Kx_6+Kx_7$ is an abelian subalgebra. 
If $\{x_6^*,x_7^*\}$ is the dual basis of $\{x_6,x_7\}$, then define  
\[\begin{array}{lllll}\al_0=x_7^*,&&\al_1=x_6^*-x_7^*,&&\al_2=x_6^*,
\\
\al_3=x_6^*+x_7^*,&&\al_4=x_6^*+2x_7^*,&&\al_5=2x_6^*+x_7^*.
\end{array}
\]
By computer-algebra, 
we verify that for  each $i\in \{0,\ldots,5\}$,  we have 
\[
\mm{ad}_{t}(x_i)=\al_i(t)x_i,\qquad\forall\,t\in \g T
\]  and 
\[
\mm{ad}_t(x_{13-i}) = -\al_i(t)x_{13-i},\qquad\forall\,t\in\g T.
\]
Consequently,
 \[
\g G=\g T\op \bigoplus_{i=0}^5Kx_i\op Kx_{13-i}
\]
is the root decomposition of $(\g G,\g T)$ and defines a root system of type $G_2$ on the vector space $\g T^*=Kx_6^*+Kx_7^*$, see  \cite[Figure 2, p.634]{hesselink19}.

Following the method described in Corollary \ref{14.06.2021--1}-(3), we know that the connection  on $V\ot\co_X$ determined by the $\mm{End}(V)$-valued form
\[
\underbrace{
\left(\begin{array}{rrrrrrrr}
0 & 1 & -1 & 1 & -1 & -1 & -1 & 0 \\
-1 & 0 & 1 & 1 & -1 & -1 & 0 & 1 \\
1 & -1 & 0 & 1 & -1 & 0 & 1 & 1 \\
-1 & -1 & -1 & 0 & 0 & 1 & 1 & 1 \\
1 & 1 & 1 & 0 & 0 & -1 & -1 & -1 \\
1 & 1 & 0 & -1 & 1 & 0 & -1 & 1 \\
1 & 0 & -1 & -1 & 1 & 1 & 0 & -1 \\
0 & -1 & -1 & -1 & 1 & -1 & 1 & 0
\end{array}\right)
}_{\sum_{i=0}^5x_i+x_{13-i}}\ot\ph+\underbrace{\left(\begin{array}{rrrrrrrr}
4 & 0 & 0 & 0 & 0 & 0 & 0 & 0 \\
0 & 1 & 0 & 0 & 0 & 0 & 0 & 0 \\
0 & 0 & 3 & 0 & 0 & 0 & 0 & 0 \\
0 & 0 & 0 & 0 & 0 & 0 & 0 & 0 \\
0 & 0 & 0 & 0 & 0 & 0 & 0 & 0 \\
0 & 0 & 0 & 0 & 0 & -3 & 0 & 0 \\
0 & 0 & 0 & 0 & 0 & 0 & -1 & 0 \\
0 & 0 & 0 & 0 & 0 & 0 & 0 & -4
\end{array}\right)}_{x_6+3x_7}\ot \ps\]
has    differential Galois group isomorphic to $G$. Indeed, as $\g G$ is semi-simple, it must be the Lie algebra of a closed and connected subgroup scheme of $\bb{GL}(V)$ \cite[Proposition II.6.2.6, p.262]{demazure-gabriel70}, which must be isomorphic to $G$. 
\end{ex}

\end{document}